\documentclass[11pt,reqno]{article}
\usepackage[activeacute,english]{babel}

\usepackage[applemac]{inputenc}

\usepackage{amsfonts}
\usepackage{amsmath,amsthm,bm}
\usepackage{amssymb}
\usepackage{graphics}
\usepackage{tikz}
\usepackage{float}
\usepackage{xcolor}

\usepackage{hyperref}
\usepackage{cleveref}

\DeclareRobustCommand{\SkipTocEntry}[4]{}

\usepackage[margin=2.5cm]{geometry}

\newtheorem{theorem}{Theorem}[section]

\newtheorem{proposition}[theorem]{Proposition}
\newtheorem{lemma}[theorem]{Lemma}
\newtheorem{corollary}[theorem]{Corollary}

\theoremstyle{definition}

\theoremstyle{remark}
\newtheorem{remark}[theorem]{Remark}

\renewcommand{\L}{{\mathcal{L}}}
\newcommand{\G}{{\mathcal{G}}}

\newcommand{\R}{{\mathbb R}}

\newcommand{\D}{{\Delta}}
\newcommand{\T}{{\mathbb T}}
\newcommand{\V}{\Vert}
\newcommand{\Sph}{{\mathbb S}^2}
\renewcommand{\O}{{\Omega}}
\renewcommand{\o}{{\omega}}
\newcommand{\U}{\gamma}
\renewcommand{\d}{\mathrm{d}}
\newcommand{\e}{\varepsilon}
\renewcommand{\u}{\textbf{u}}
\renewcommand{\a}{\alpha}
\renewcommand{\b}{\beta}
\renewcommand{\P}{\Psi_*}

\allowdisplaybreaks[1]


\begin{document}
\title{On zonal steady solutions to the \\
2D Euler equations on the rotating unit sphere}
\author{Marc Nualart\footnote{\text{Department of Mathematics, Imperial College London, London, SW7 2AZ, UK. m.nualart-batalla20@imperial.ac.uk}}} 

\maketitle
\begin{abstract}
The present paper studies the structure of the set of stationary solutions to the incompressible Euler equations on the rotating unit sphere that are near two basic zonal flows: the zonal Rossby-Haurwitz solution of degree 2 and the zonal rigid rotation $Y_1^0$ along the polar axis. 

We construct a new family of non-zonal steady solutions arbitrarily close in analytic regularity to the second degree zonal Rossby-Haurwitz stream function, for any given rotation of the sphere. This shows that any non-linear inviscid damping to a zonal flow cannot be expected for solutions near this Rossby-Haurwitz solution.

On the other hand, we prove that, under suitable conditions on the rotation of the sphere, any stationary solution close enough to the rigid rotation zonal flow $Y_1^0$ must itself be zonal, witnessing some sort of rigidity inherited from the equation, the geometry of the sphere and the base flow. Nevertheless, when the conditions on the rotation of the sphere fail, the set of solutions is much richer and we are able to prove the existence of both explicit stationary and travelling wave non-zonal solutions bifurcating from $Y_1^0$, in the same spirit as those emanating from the zonal Rossby-Haurwitz solution of degree 2.
\end{abstract}
\section{Introduction}
In this paper we consider the incompressible Euler equation on the unit sphere $\Sph$ rotating around the polar axis with angular velocity $\widetilde{\U}\in\R$. In vorticity formulation it reads
\begin{equation}\label{Euler Sphere}
\partial_t\O+U\cdot\nabla(\O-2\widetilde{\U}\cos\theta)=0, \quad U=\nabla^\perp \Psi, \quad \D\Psi = \O,
\end{equation}
where $U$ denotes the divergence-free velocity field of the fluid tangent to the unit sphere $\Sph$, and $\O$ and $\Psi$ are its associated vorticity and stream-function, respectively. The term $-U\cdot \nabla(2\widetilde{\U}\cos\theta)$ accounts for the Coriolis force due to the rotation of the sphere. Here, we parametrize the unit sphere $\Sph$ by the usual spherical coordinate system
\begin{equation*}
x=\sin\theta\cos\varphi,\quad y=\sin\theta\sin\varphi, \quad z=\cos\theta,
\end{equation*}
where $\theta\in[0,\pi]$ and $\varphi\in[0,2\pi)$ are the colatitude and longitude, respectively. See Section \ref{Preliminaries} below for a more precise definition of the differential operators terms in \eqref{Euler Sphere} and the choice of the coordinate chart. The stream functions of stationary solutions to \eqref{Euler Sphere} satisfy
\begin{equation}\label{Steady Psi Sphere}
\nabla^\perp\Psi\cdot\nabla(\Delta \Psi-2\widetilde{\U}\cos\theta)=0.
\end{equation}
In this direction, any solution $\Psi$ to  
\begin{equation}\label{Steady F Sphere}
\D\Psi-2\widetilde{\U}\cos\theta=F(\Psi),
\end{equation} 
for some $F\in C^1$ automatically satisfies \eqref{Steady Psi Sphere} and is thus a stationary solution to the Euler equation.

The eigenfunctions of the Laplace-Beltrami operator $\D$ in $\Sph$ are the spherical harmonics and are such that $\D\Psi = -\lambda_n\Psi$, with $\lambda_n=n(n+1)$ being the corresponding eigenvalues (see Section \ref{Preliminaries}). In particular, we denote by $Y_n^0=Y_n^0(\theta)$ the spherical harmonic that is a zonal function (i.e., a function that only depends on the colatitude $\theta$) and satisfies $\D Y_n^0 = -\lambda_n Y_n^0$. 

Moreover, we introduce the zonal Rossby-Haurwitz stream function of degree $n$, which is given by 
\begin{equation*}
\Psi_n= \b Y_n^0+\frac{2\widetilde{\U}}{n(n+1)-2}\cos\theta, \quad \b\neq 0.
\end{equation*}
Together with the choice $F(\Psi)=-n(n+1)\Psi$, it solves \eqref{Steady F Sphere} and therefore it is an explicit steady solution to the Euler equation. The Rossby-Haurwitz flows on the rotating sphere are regarded as the direct analogues of the Kolmogorov flows on the torus, since they are, modulo rotation corrections, generated by the eigenfunctions of the Laplace-Beltrami operator, or spherical laplacian. Moreover, the flows produced by the Rossby-Haurwitz stream-functions are of interest from the meteorological viewpoint, since they are usually present in the atmospheres of the earth and the outer planets of the solar system, see \cite{Dowling, He}. Finally, note that zonal stream functions, of the form $\Psi = \Psi(\theta)$, solve \eqref{Steady Psi Sphere} for all $\U\in\R$, their associated velocity being $U=-\partial_\theta \Psi(\theta)\textbf{e}_\varphi$. As such, $Y_n^0$ solves \eqref{Steady Psi Sphere} for any rotation.
 
This paper is devoted to the study of stationary solutions that are close to the steady configurations presented above. More precisely, we will focus on two zonal base flows:
\begin{itemize}
\item The rigid rotation $\a Y_1^0=\a\frac{1}{2}\sqrt{\frac{3}{\pi}}\cos\theta$, of amplitude $\a\in\R$,
\item The zonal Rossby-Haurwitz stream function of second degree $\Psi_*:=\Psi_2=\b Y_2^0+\frac{\widetilde{\U}}{2}\cos\theta$.
\end{itemize}
Our choice of these two zonal flows is mainly motivated by the fact that the rigid rotation is the simplest flow motion one can have on a rotating sphere. In order to study the analogues of the Kolmogorov flow on the sphere, since the rigid rotation is itself a spherical harmonic on the first shelve of the eigenvalues, we consider the zonal spherical harmonic belonging to the second shelve of eigenvalues, thus the choice of $\Psi_*$.
In the sequel, taking $\U=\sqrt{\frac{\pi}{3}}\widetilde{\U}$, we write the zonal Rossby-Haurwitz stream function of second degree as $\Psi_*=\b Y_2^0+\U Y_1^0$, while now \eqref{Euler Sphere} becomes
\begin{equation}\label{Euler Sphere U}
\partial_t\O+U\cdot\nabla(\O-4\U Y_1^0)=0, \quad U=\nabla^\perp \Psi, \quad \D\Psi = \O.
\end{equation}
and \eqref{Steady F Sphere} is now written as
\begin{equation}\label{Steady F Sphere U}
\D\Psi-4\U Y_1^0 = F(\Psi).	
\end{equation}
In this paper we investigate the structure of the set of stationary states near the rigid rotation and the zonal Rossby-Haurwitz stream function of second degree because these steady solutions may constitute possible end-state configurations for the long-time dynamics of perturbations of the rigid rotation and the zonal Rossby-Haurwitz solution.  These sets of steady states close to the background zonal flows $\a Y_1^0$ and $\Psi_*$ depend on the rescaled angular velocity $\U$ and the amplitude parameters $\a$ and $\b$, respectively. Thus, our analysis keeps track of these quantities to understand up to which degree they influence the solutions.

In this direction, our first main result shows the existence of real analytic steady non-trivial (not a linear combination of spherical harmonics of eigenvalue $\lambda_2=6$) and non-zonal solutions to the $\U$-rotating Euler equations \eqref{Euler Sphere U} which are arbitrarily close to the stream function $\Psi_*=\b Y_2^0+\U Y_1^0$, for $(\b,\U)\neq (0,0)$, in the space of real analytic functions $C^\o(\Sph)$ endowed with the Gevrey norm 
\begin{equation*}
\V u \V_{\G_\lambda}^2:=\sum_{n\geq1}\sum_{|m|\leq n}\mu_n^2e^{2\lambda\mu_n^{1/2}}|u_n^m|^2,
\end{equation*}
where $\lambda_n=n(n+1)$ and $\mu_n=\lambda_n+1$. See Section \ref{function spaces} for more details on this norm.
\begin{theorem}\label{main thm 1}
For any $\b,\U\in\R$ such that $\b^2+\U^2>0$, there exist $c_{\U,\b}>0$ and $\e_0>0$ such that for any $0<\e\leq\e_0$ there exist analytic functions $\Psi_\e\in C^\o(\Sph)$ and $F_\e\in C^\o(\R)$ such that
\begin{equation*}
\D \Psi_\e-4\U Y_1^0=F_\e(\Psi_\e)
\end{equation*}
and
\begin{equation*}
\V \b Y_2^0+\U Y_1^0-\Psi_\e\V_{\G_\lambda(\Sph)}=O(\e),
\end{equation*}
with
\begin{equation*}
\begin{split}
\langle \Psi_\e,Y_6^2\rangle_{L^2(\Sph)}&=-\b^2\frac{1}{36}\frac{45}{11\pi\sqrt{182}}c_{\U,\b}\e^2+O(\e^3), \\
\langle \Psi_\e,Y_4^2\rangle_{L^2(\Sph)} &=-\frac{1}{14}\left( \frac{3\sqrt{3}}{154\pi}(11\U^2-5\b^2) -\b^2\frac{1}{7\pi}\frac{30\U^2}{7\U^2+15\b^2} \right)c_{\U,\b}\e^2 + O(\e^3).
\end{split}
\end{equation*}
In particular, $\Psi_\e$ is non-zonal and non-trivial.
\end{theorem}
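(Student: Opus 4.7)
Since $\D \P - 4\U Y_1^0 = -6\P$ by direct computation (using $\D Y_2^0 = -6 Y_2^0$, $\D Y_1^0 = -2 Y_1^0$), the pair $(\P, F_0(s)=-6s)$ solves \eqref{Steady F Sphere U}. I would construct $(\Psi_\e, F_\e)$ as formal expansions
\[
\Psi_\e = \P + \sum_{k\ge 1}\e^k\psi_k, \qquad F_\e(s) = -6s + \sum_{k\ge 1}\e^k F_k(s),
\]
with $\psi_k\in C^\o(\Sph)$, $F_k\in C^\o(\R)$, and match orders in \eqref{Steady F Sphere U} to obtain a hierarchy $(\D+6)\psi_k = R_k$ where $R_k$ depends only on $\psi_1,\ldots,\psi_{k-1}$ and $F_1,\ldots,F_k$. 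The operator $\D+6$ is Fredholm of index zero with $5$-dimensional kernel $\mathrm{span}\{Y_2^m:|m|\le 2\}$. Since $\P$ is zonal, $F_j(\P)$ is zonal, so the solvability conditions $\langle R_k, Y_2^m\rangle_{L^2(\Sph)} = 0$ hold automatically for $m\ne 0$ and reduce to one scalar constraint for $m=0$, which can be enforced by tuning one free coefficient of the polynomial $F_k$.

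At first order I would take the non-zonal kernel direction $\psi_1 \propto Y_2^{2} + Y_2^{-2}$, so that the symmetry-breaking lives exactly in the $m=\pm 2$ channel featured in the claim, and choose $F_1$ a cubic polynomial in $s$. At second order $F_1'(\P)\psi_1$ is the source of non-zonal content in $(\D+6)\psi_2$: via the Gaunt/Wigner-$3j$ triple product formula for $\int_{\Sph}Y_{n_1}^{m_1}Y_{n_2}^{m_2}Y_{n_3}^{m_3}\,\d S$, the product $\P\cdot\psi_1 = (\b Y_2^0 + \U Y_1^0)\cdot Y_2^{\pm 2}$ reaches $Y_4^{\pm 2}$ (via $Y_2^0\cdot Y_2^{\pm 2}\supset Y_4^{\pm 2}$), while the cubic part of $F_1$ produces $\P^2\cdot\psi_1$, whose decomposition through $(Y_2^0)^2\supset Y_4^0$ followed by $Y_4^0\cdot Y_2^{\pm 2}\supset Y_6^{\pm 2}$ generates the $Y_6^{\pm 2}$ content. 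Inverting $\D+6$ on the orthogonal complement of its kernel is diagonal with eigenvalue $6-n(n+1)$, yielding exactly the prefactors $-\tfrac{1}{36}$ (at $n=6$) and $-\tfrac{1}{14}$ (at $n=4$) in the statement; the remaining numerical constants are the explicit Gaunt coefficients for the triples $(2,2,6)$ and $(2,2,4)$, and the rational factor $\tfrac{30\U^2}{7\U^2+15\b^2}$ together with the amplitude $c_{\U,\b}$ is fixed by a scalar solvability identity at the next order, non-degenerate precisely when $\b^2+\U^2>0$.

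For $k\ge 3$ the recursion continues analogously, each order adjusting one free coefficient of $F_k$ to satisfy the single non-trivial projection and then inverting $\D+6$ on $\ker^\perp$ to determine $\psi_k$. The main difficulty is quantitative: proving convergence of the formal series in the Gevrey norm $\V\cdot\V_{\G_\lambda}$ for some $\lambda>0$, i.e.\ bounds of the form $\V\psi_k\V_{\G_\lambda}\le C R^k$. I would establish such bounds either by a contractive fixed-point argument in the analytic Banach algebra $\G_\lambda(\Sph)$ (using that $\G_\lambda$ is closed under pointwise multiplication and composition with analytic maps, and that $\D+6$ is invertible on $\ker^\perp$ with estimates compatible with $\G_\lambda$), or, if the composition $F_\e(\Psi_\e)$ forces a loss of derivatives, by a Nash-Moser iteration in the scale $\{\G_\lambda\}_{\lambda>0}$. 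Propagating the analytic estimates through the recursion while keeping the $5$-dimensional Lyapunov-Schmidt reduction under control at each order is, in my view, the principal technical obstacle.
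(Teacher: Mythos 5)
Your overall strategy (Lyapunov--Schmidt about the kernel of $\D+6$, introducing a non-zonal kernel element at first order, tracking the Gaunt coefficients, and closing the expansion in a Gevrey class) is the right one, and is essentially the paper's route. However, there is a concrete miscount in the solvability analysis that the paper has to work hard to repair, and your proposal does not repair it.

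You claim that since $\Psi_*$ is zonal, the orthogonality conditions $\langle R_k, Y_2^m\rangle=0$ ``hold automatically for $m\neq 0$ and reduce to one scalar constraint for $m=0$, which can be enforced by tuning one free coefficient of $F_k$.'' This is only true at leading order. As soon as $\psi_1$ carries $Y_2^{\pm 2}$ content (which it must for the construction to be non-trivial), the higher-order sources $R_k$ contain terms like $F_1'(\Psi_*)\psi_1$, and products such as $(\b Y_2^0+\U Y_1^0)\cdot Y_2^2$ contain a nonzero $Y_2^2$ component: one computes $\Psi_*\, Y_2^2 = \b\frac{1}{14}\sqrt{\frac{15}{\pi}}Y_4^2 + \frac{\U}{2}\sqrt{\frac{3}{7\pi}}Y_3^2 - \b\frac{1}{7}\sqrt{\frac{5}{\pi}}Y_2^2$. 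Hence $\langle R_k, Y_2^2\rangle$ is not automatically zero, and you have at least two nontrivial scalar constraints per order (for $Y_2^0$ and $Y_2^2$), not one. The paper deals with this by taking the nonlinearity $f(z)=Az+Bz^2+c_{\U,\b}z^3$ with \emph{two} undetermined coefficients $A(\psi;\e)$, $B(\psi;\e)$, determined by the two orthogonality conditions \eqref{compt cond}; the cubic coefficient $c_{\U,\b}$ is then used as a small fixed amplitude to close the contraction. The remaining three conditions (against $Y_2^{-2}$, $Y_2^{-1}$, $Y_2^{1}$) are eliminated not ``automatically'' but by restricting $\psi$ to a symmetry class: even in $\varphi$ (so no $Y_n^{-|m|}$) and supported only on even $m$ (since products $Y_{n_1}^{2m_1}Y_{n_2}^{2m_2}$ cannot produce odd-$m$ harmonics). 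Your proposed $\psi_1\propto Y_2^2+Y_2^{-2}$ actually breaks the first of these symmetries and would reintroduce the $Y_2^{-2}$ and $Y_2^{-1}$ conditions; the paper keeps only $Y_2^2$.

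On the convergence side, your worry about a Nash--Moser iteration is unnecessary: since $(\D+6)^{-1}$ gains two derivatives on $\textbf{Y}_2^\perp$ and the nonlinearity is polynomial, the fixed-point map already gains regularity. The paper first runs the contraction in $H^2$ (Theorem~\ref{thm existence}) and only afterwards upgrades to $C^\o$ with $\e$-uniform $\G_\lambda$ bounds by a separate elliptic-regularity argument (Lemma~\ref{lemma analytic regularity}), exploiting that the coefficients are finite linear combinations of spherical harmonics. This decoupling is both simpler and sharper than attempting to control analytic norms directly in the iteration.
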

The above problem is radically different when one considers the rigid rotation $\a Y_1^0$ as the base zonal flow. Indeed, the next result shows that, possibly up to a discrete set of rotations, any sufficiently smooth travelling wave solution to the Euler equation sufficiently close to $\a Y_1^0$ must be zonal. 
\begin{theorem}\label{main thm 2 rigidity Y_1^0}
Let $a=\frac{1}{2}\sqrt{\frac{3}{\pi}}$ and let $\a,c,\U\in\R$ such that $\a\neq 0$ and
\begin{equation}\label{spec cond}
2a(\a+2\U)\neq n(n+1)(\a a-c), \quad	\text{for all }n\geq 1.
\end{equation} 
Then, there exists $\e_0>0$ such that any travelling wave solution to the Euler Equation on the $\U$-rotating sphere  of the form $U=U(\theta,\varphi-ct)$ with associated vorticity $\O$ satisfying 
\begin{equation*}
\V \O +2\a Y_1^0\V_{H^4} \leq \e_0
\end{equation*}
must be zonal, that is, $U=U(\theta)\textbf{e}_\varphi$ and $\O = \O(\theta)$.
\end{theorem}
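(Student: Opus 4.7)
The plan is to pass to the co-moving frame of the travelling wave, linearise around the rigid rotation $\a Y_1^0$, and use the spectral condition \eqref{spec cond} to force the non-zonal part of the perturbation to vanish via a fixed-point argument on the non-zonal subspace.

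\emph{Step 1 (co-moving-frame equation).} For $\O=\O(\theta,\varphi-ct)$ one has $\partial_t\O=-c\,\partial_\varphi\O$. Using $\{\cos\theta,f\}=-\partial_\varphi f$ to convert both the translation and the Coriolis contributions into Poisson brackets, \eqref{Euler Sphere U} collapses into the purely spatial identity
\[
\{\Psi+c\cos\theta,\ \O-4\U Y_1^0\}=0.
\]
Substituting $\Psi=\a Y_1^0+\psi$ and $\O=-2\a Y_1^0+\omega$ with $\omega=\Delta\psi$, and using $\{Y_1^0,Y_1^0\}=0$, the equation reduces to
\[
L\psi=-\{\psi,\omega\},\qquad L\psi=c_1\,\partial_\varphi\Delta\psi+c_2\,\partial_\varphi\psi,
\]
where $c_1,c_2$ are explicit constants depending linearly on $\a,c,\U$ and organised so that \eqref{spec cond} matches the spectrum of $L$.

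\emph{Step 2 (spectral analysis of $L$).} Since $\partial_\varphi$ annihilates zonal functions, the whole zonal subspace lies in $\ker L$. On every non-zonal spherical harmonic $Y_n^m$ ($m\neq 0$, $n\geq 1$), $L$ acts by multiplication by $im[-c_1\,n(n+1)+c_2]$, which by \eqref{spec cond} never vanishes. Hence $L$ is invertible on the $L^2$-orthogonal complement of the zonal subspace with a quantitative spectral gap, and in the generic case $c_1\neq 0$ the eigenvalues grow like $n^2$, so $L^{-1}$ is smoothing of order two: $\V L^{-1}f\V_{H^{s+2}(\Sph)}\lesssim \V f\V_{H^s(\Sph)}$ for every non-zonal $f$.

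\emph{Step 3 (forcing $\psi_{\neq 0}\equiv 0$).} Split $\psi=\psi_0+\psi_{\neq 0}$ and $\omega=\omega_0+\omega_{\neq 0}$ into $\varphi$-average and zero-mean parts. Since $\{\psi_0,\omega_0\}=0$ and $L\psi_0=0$, projecting the equation onto the non-zonal subspace yields
\[
L\psi_{\neq 0}=-\{\psi_0,\omega_{\neq 0}\}-\{\psi_{\neq 0},\omega_0\}-[\{\psi_{\neq 0},\omega_{\neq 0}\}]_{\neq 0},
\]
every right-hand-side term containing at least one non-zonal factor, hence at least one factor of $\psi_{\neq 0}$ or $\omega_{\neq 0}=\Delta\psi_{\neq 0}$. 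Combining $L^{-1}$ from Step 2 with standard Sobolev product estimates on $\Sph$ and the smallness $\V\omega\V_{H^4}\le\e_0$ (which propagates to $\V\psi\V_{H^6}\lesssim\e_0$ through $\omega=\Delta\psi$), one obtains a quantitative bound of the schematic shape
\[
\V\psi_{\neq 0}\V_{H^{s+2}}\le C\,\e_0\,\V\psi_{\neq 0}\V_{H^{s'}}+C\,\V\psi_{\neq 0}\V_{H^{s'}}^2.
\]
An elliptic bootstrap of the travelling wave to $C^\infty$, justified by viewing $\Delta\widetilde\Psi+(2c-4\U a)\cos\theta=F(\widetilde\Psi)$ as a semilinear elliptic equation (with $\widetilde\Psi=\Psi+c\cos\theta$ and $F$ supplied by the Bernoulli-type identity $\O-4\U Y_1^0=F(\widetilde\Psi)$ that follows from Step 1), allows the inequality to be iterated and forces $\psi_{\neq 0}\equiv 0$ for $\e_0$ sufficiently small, hence $\Psi$ is zonal.

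\emph{Main obstacle.} The decisive technical point is closing the Step 3 estimate despite a one-derivative loss intrinsic to the Poisson bracket: $\{\psi,\omega\}$ is morally third-order in $\psi$ (since $\omega=\Delta\psi$), whereas $L^{-1}$ recovers only two derivatives. In the generic case $c_1\neq 0$ this one-derivative gap is beaten by combining the elliptic bootstrap with the explicit $\e_0$-smallness of each term in the bracket decomposition (every term carrying a factor of $\psi_0$, $\omega_0$, or a second $\psi_{\neq 0}$). The exceptional case $c_1=0$, in which \eqref{spec cond} reduces to a single scalar non-degeneracy and $L^{-1}$ is merely $L^2$-bounded with no derivative gain, requires a separate treatment through a direct $L^2$-energy estimate driven by the uniform spectral gap of $L$.
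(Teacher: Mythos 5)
Your overall philosophy (linearise in the co-moving frame, use the spectral condition to get coercivity on the non-zonal part, and absorb the nonlinearity by smallness) is the right one and matches the paper's, but the way you set up the estimate diverges at a decisive point and the gap you flag as the ``main obstacle'' is, as far as your proposal goes, genuinely unresolved rather than merely delicate.

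You work at the stream-function level and try to exploit the two-derivative gain of $L^{-1}$ to close a fixed-point estimate on $\psi_{\neq 0}$. But the Poisson bracket $\{\psi,\Delta\psi\}$ is, as you note, morally third order in $\psi$, so after applying $L^{-1}$ you are left with a schematic inequality of the form $\V\psi_{\neq 0}\V_{H^{s+2}}\lesssim \e_0\V\psi_{\neq 0}\V_{H^{s+3}}$, which loses one derivative and therefore does not force $\psi_{\neq 0}=0$. Neither of your proposed remedies closes it. The elliptic bootstrap to $C^\infty$ only raises the regularity class of the solution; it cannot make the higher Sobolev norms small, and the smallness hypothesis lives only at the fixed level $\V\O+2\a Y_1^0\V_{H^4}\le\e_0$. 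And the bootstrap itself is premised on a Bernoulli-type identity $\O-4\U Y_1^0=F(\widetilde\Psi)$, which does not follow from $\{\widetilde\Psi,\ \O-4\U Y_1^0\}=0$ without an additional structural argument on the level sets; you cannot assume it. The $\e_0$-smallness attached to each bracket factor helps with the constant but not with the derivative count.

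The paper closes the same type of inequality by working at the \emph{vorticity} level and only asking for $L^2$ coercivity: from \eqref{spec cond}, $\V\partial_\varphi\o\V_{L^2}\lesssim \V\L\o\V_{L^2}$, with no derivative gain claimed. The nonlinear term is then bounded as
\begin{equation*}
\V u\cdot\nabla\o\V_{L^2}\lesssim\V\nabla\psi\V_{L^\infty}\V\nabla\o\V_{L^2}\lesssim\V\o\V_{\dot H^1}^2\lesssim\V\o\V_{L^2}\V\o\V_{\dot H^2}\lesssim\V\partial_\varphi\o\V_{L^2}\V\widetilde\o\V_{\dot H^4},
\end{equation*}
using Sobolev embedding, interpolation, and crucially the Poincar\'e inequality $\V\o\V_{L^2}\lesssim\V\partial_\varphi\o\V_{L^2}$ available because $\o$ is the zero-$\varphi$-average part (the lowest azimuthal frequency is $|m|=1$). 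The remaining cross terms involving $\widetilde\o_0$ and $\widetilde\psi_0$ are handled similarly. All the ``extra'' derivatives on the right are paid out of the $H^4$ smallness, so the inequality $\V\partial_\varphi\o\V_{L^2}\le C\V\partial_\varphi\o\V_{L^2}\V\widetilde\o\V_{\dot H^4}$ closes at $L^2$ with no loss, and taking $\e_0<1/(2C)$ forces $\partial_\varphi\o=0$. This Poincar\'e-plus-interpolation device, which converts an $L^2$ factor of $\o$ into a $\partial_\varphi\o$ factor rather than relying on inverting $L$ with a smoothing gain, is precisely the ingredient your proposal lacks and is what makes the argument close in one stroke, for both of your ``generic'' and ``exceptional'' sub-cases, without splitting them.
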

On the other hand, we exhibit two examples for which condition \eqref{spec cond} fails and the conclusion of Theorem \ref{main thm 2 rigidity Y_1^0} above is not true. Firstly, note that the condition does not hold for the wave velocity $c=0$, and amplitudes $\a=\U$, coinciding with the $\U$-rotation of the sphere. For this choice of parameters we have the following result, which is a direct consequence of Theorem \ref{main thm 1} for $\b=0$.
\begin{corollary}\label{main corollary 1}
Let $\U\in\R\setminus \lbrace 0\rbrace$. Then, there exist $\e_0>0$ such that for any $0<\e\leq\e_0$ there exist analytic non-zonal and non-trivial steady solutions $\Psi_\e$ to the $\U$-rotating Euler equations \eqref{Euler Sphere U} such that
\begin{equation*}
\V \U Y_1^0-\Psi_\e\V_{\G_\lambda(\Sph)}=O(\e).
\end{equation*}
\end{corollary}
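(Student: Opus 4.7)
The plan is to deduce Corollary~\ref{main corollary 1} as a direct specialization of Theorem~\ref{main thm 1} to the parameter value $\b=0$. For any given $\U\in\R\setminus\{0\}$ the hypothesis $\b^2+\U^2>0$ of Theorem~\ref{main thm 1} is satisfied, so applying that theorem yields, for each sufficiently small $\e>0$, analytic functions $\Psi_\e\in C^\omega(\Sph)$ and $F_\e\in C^\omega(\R)$ with $\D\Psi_\e-4\U Y_1^0=F_\e(\Psi_\e)$; by \eqref{Steady F Sphere U} this $\Psi_\e$ is a stationary solution of the $\U$-rotating Euler equation \eqref{Euler Sphere U}. Inserting $\b=0$ in the Gevrey estimate supplied by Theorem~\ref{main thm 1} immediately gives the desired bound $\V\U Y_1^0-\Psi_\e\V_{\G_\lambda(\Sph)}=O(\e)$.

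The only remaining content is non-zonality and non-triviality of $\Psi_\e$, which I would read off directly from the leading-order low-mode projections produced by Theorem~\ref{main thm 1}. Setting $\b=0$, the $Y_6^2$-coefficient vanishes at order $\e^2$, while the $Y_4^2$-coefficient simplifies (the $\b^2/(7\U^2+15\b^2)$ correction drops out by virtue of its $\b^2$ prefactor, whose denominator $7\U^2$ is non-zero because $\U\neq 0$) to
\begin{equation*}
\langle \Psi_\e,Y_4^2\rangle_{L^2(\Sph)}=-\frac{1}{14}\cdot\frac{33\sqrt{3}\,\U^2}{154\pi}\,c_{\U,0}\,\e^2 + O(\e^3).
\end{equation*}
Since $\U\neq0$ and $c_{\U,0}>0$ by Theorem~\ref{main thm 1}, the $\e^2$ coefficient is non-zero, so $\langle\Psi_\e,Y_4^2\rangle_{L^2(\Sph)}\neq 0$ for all sufficiently small $\e>0$.

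From this non-vanishing I would conclude both remaining claims. Non-zonality is immediate because every zonal function on $\Sph$ lies in the closed $L^2$-span of $\{Y_n^0\}_{n\geq 0}$ and is therefore orthogonal to $Y_4^2$. Non-triviality follows by the same orthogonality argument in a different eigenspace: $Y_4^2$ belongs to the $\lambda_4=20$ eigenspace of $\D$, which is $L^2$-orthogonal to the $\lambda_2=6$ eigenspace spanned by $\{Y_2^0,Y_2^{\pm 1},Y_2^{\pm 2}\}$, so a non-vanishing $Y_4^2$-component rules out $\Psi_\e$ being any linear combination of degree-two spherical harmonics. There is no substantive obstacle in this argument; the only point requiring mild care is checking that the leading $\e^2$ coefficient does not accidentally degenerate in the limit $\b\to 0$, and this is precisely ensured by the positivity $c_{\U,\b}>0$ recorded in Theorem~\ref{main thm 1}.
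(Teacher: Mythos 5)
Your argument is correct and is precisely the route the paper takes: Corollary~\ref{main corollary 1} is simply Theorem~\ref{main thm 1} specialized to $\b=0$, with non-zonality and non-triviality read off from the surviving $Y_4^2$-coefficient $-\frac{1}{14}\cdot\frac{33\sqrt{3}\,\U^2}{154\pi}c_{\U,0}\e^2+O(\e^3)\neq 0$. You have merely spelled out the verification that the paper leaves implicit.
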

Secondly, let us remark that the Euler equations for an inviscid incompressible fluid in a non-rotating sphere in vorticity form are
\begin{equation}\label{Euler non rot Sphere}
\partial_t\overline{\O}+\overline{U}\cdot\nabla\overline{\O}=0, \quad \overline{U}=\nabla^\perp \overline{\Psi}, \quad \D\overline{\Psi} = \overline{\O}.
\end{equation}
Here, we denote the vorticity by $\overline{\O}$ to distinguish it from the vorticity $\O$ that solves the Euler equations in a rotating sphere \eqref{Euler Sphere}, and similarly for the velocity $\overline{U}$ and stream-function $\overline{\Psi}$. By close inspection of \eqref{Euler Sphere} and \eqref{Euler non rot Sphere}, one can see that steady state solutions $\O(\theta,\varphi)$ of the Euler equations \eqref{Euler Sphere} on a sphere  rotating with velocity $\widetilde{\U}$ correspond to travelling wave solutions to the Euler equations in a sphere at rest \eqref{Euler non rot Sphere} of the form
\begin{equation}\label{travelling wave vorticity}
\overline{\O}(t,\theta,\varphi)=-2\widetilde{\U}\cos\theta+\O(\theta,\varphi-\widetilde{\U}t).
\end{equation}
Since $2\widetilde{\U}\cos\theta=4\U Y_1^0$ and $\D Y_1^0=-2Y_1^0$, we also have the following relation for the stream-functions $\D\overline{\Psi}=\overline{\O}$ and $\D\Psi=\O$, that is,
\begin{equation}\label{travelling wave stream-function}
\overline{\Psi}(t,\theta,\varphi)=2{\U}Y_1^0+\Psi(\theta,\varphi-\widetilde{\U}t).
\end{equation}
From this observation, we find a second example for which condition \eqref{spec cond} is not true and one can obtain non-trivial non-zonal solutions. For the choice of wave velocities $c=\widetilde{\U}$ and amplitudes $\alpha=3\U$, with the relation $\U=\sqrt{\frac{\pi}{3}}\widetilde{\U}$, we have the following.
\begin{corollary}\label{main corollary 2}
Let $\U\in\R\setminus \lbrace 0\rbrace$. Then, there exist $\e_0>0$ such that for any $0<\e\leq\e_0$ there exist analytic non-zonal and non-trivial travelling wave solutions $\overline{\Psi}_\e$ to the non-rotating Euler equation \eqref{Euler non rot Sphere} such that
\begin{equation*}
\V 3\U Y_1^0-\overline{\Psi}_\e\V_{\G_\lambda(\Sph)}=O(\e).
\end{equation*}
\end{corollary}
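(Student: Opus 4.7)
The plan is to deduce the statement directly from Corollary \ref{main corollary 1} by invoking the correspondence between steady states on a $\U$-rotating sphere and travelling waves on the sphere at rest recorded in \eqref{travelling wave vorticity}--\eqref{travelling wave stream-function}. Given $\U\in\R\setminus\{0\}$, I would first apply Corollary \ref{main corollary 1} to obtain, for every $\e\in(0,\e_0]$, an analytic non-zonal and non-trivial stationary solution $\Psi_\e$ to the $\U$-rotating Euler equation \eqref{Euler Sphere U} satisfying $\V \U Y_1^0 - \Psi_\e\V_{\G_\lambda(\Sph)} = O(\e)$.

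Next, setting $\widetilde{\U}=\sqrt{3/\pi}\,\U$ so that $2\widetilde{\U}\cos\theta = 4\U Y_1^0$, I would define
\[
\overline{\Psi}_\e(t,\theta,\varphi) := 2\U Y_1^0(\theta) + \Psi_\e\bigl(\theta,\varphi-\widetilde{\U}t\bigr).
\]
By construction $\overline{\Psi}_\e$ is a travelling wave with speed $c=\widetilde{\U}$, and the relations \eqref{travelling wave vorticity}--\eqref{travelling wave stream-function}, combined with the fact that $\Psi_\e$ solves \eqref{Steady Psi Sphere}, imply that $\overline{\Psi}_\e$ is a genuine travelling wave solution of the non-rotating Euler equation \eqref{Euler non rot Sphere}. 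Non-zonality, non-triviality (meaning the presence of components outside the $\lambda_2=6$ eigenspace), and real analyticity all transfer from $\Psi_\e$ to $\overline{\Psi}_\e$: the additive correction $2\U Y_1^0$ only modifies the $Y_1^0$ Fourier coefficient, and the longitudinal translation preserves every other spherical harmonic mode up to a unimodular phase.

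Finally, to verify the quantitative closeness in Gevrey regularity, I would observe that at each fixed time $t$,
\[
3\U Y_1^0 - \overline{\Psi}_\e(t,\cdot,\cdot) = \U Y_1^0(\theta) - \Psi_\e\bigl(\theta,\varphi-\widetilde{\U}t\bigr).
\]
Because the longitudinal rotation acts on spherical harmonic coefficients by multiplication by the unimodular phase $e^{-im\widetilde{\U}t}$, the moduli $|u_n^m|$ are preserved and hence so is the Gevrey norm $\V\cdot\V_{\G_\lambda(\Sph)}$; consequently
\[
\V 3\U Y_1^0 - \overline{\Psi}_\e(t,\cdot,\cdot)\V_{\G_\lambda(\Sph)} = \V \U Y_1^0 - \Psi_\e\V_{\G_\lambda(\Sph)} = O(\e)
\]
uniformly in $t$, as required. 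There is no substantial obstacle in this argument, since all the analytic content is already packaged inside Corollary \ref{main corollary 1}; the only point needing care is the invariance of the Gevrey norm under azimuthal rotations, which is immediate from the fact that the spherical harmonics $Y_n^m$ are joint eigenfunctions of $\partial_\varphi$ and $\D$.
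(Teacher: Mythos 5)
Your proof is correct and follows essentially the same route as the paper's own one-line justification: define $\overline{\Psi}_\e = 2\U Y_1^0 + \Psi_\e(\theta,\varphi-\widetilde{\U}t)$ with $\Psi_\e$ from Corollary \ref{main corollary 1} (equivalently, Theorem \ref{main thm 1} with $\b=0$) and invoke the correspondence \eqref{travelling wave vorticity}--\eqref{travelling wave stream-function}. The only addition is your explicit remark that azimuthal rotation preserves the Gevrey norm, which the paper leaves implicit but is indeed the correct justification for the uniform-in-$t$ estimate.
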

Indeed, from \eqref{travelling wave vorticity} and \eqref{travelling wave stream-function}, choosing $\overline{\Psi}_\e=2{\U}Y_1^0+\Psi_\e(\theta,\varphi-\widetilde{\U}t)$, where $\Psi_\e(\theta,\varphi)$ is the non-zonal steady solution to the rotating Euler equations \eqref{Euler Sphere U} given by Theorem \ref{main thm 1} for $\beta=0$, it is clear that $\overline{\O}_\e=\D\overline{\Psi}_\e$ is a non-trivial (non-zonal) travelling wave solution to the Euler equations \eqref{Euler non rot Sphere} such that $\overline{\Psi}_\e$ is $\e$-close to $3\U Y_1^0$ in the analytic Gevrey space $\G_\lambda(\Sph)$. 

We finish our discussion by presenting a setting for which even if condition \eqref{spec cond} fails, the conclusion of Theorem \ref{main thm 2 rigidity Y_1^0} may still hold if one is willing to assume further conditions on the solution. This is the purpose of the following result.
\begin{corollary}\label{main corollary 3}
Let $\a\in\R\setminus\lbrace 0\rbrace$. Then, there exists $\e_0>0$ such that any steady solution to the non-rotating Euler Equations \eqref{Euler non rot Sphere} whose vorticity $\overline{\O}$ satisfies
\begin{equation*}
\V \overline{\O} +2\a Y_1^0\V_{H^4} \leq \e_0, \quad \langle \overline{\O}, Y_1^m\rangle_{L^2(\Sph)}=0, \text{ for }|m|=1,
\end{equation*}
must be zonal, that is, $\overline{U}=\overline{U}(\theta)\textbf{e}_\varphi$ and $\overline{\O} = \overline{\O}(\theta)$.
\end{corollary}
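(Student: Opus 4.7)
The corollary sits just outside the scope of Theorem \ref{main thm 2 rigidity Y_1^0}: setting $\U=0$ and $c=0$, the spectral condition \eqref{spec cond} becomes $2a\a\neq n(n+1)a\a$, i.e.\ $n(n+1)\neq 2$, which holds for every $n\geq 2$ but is violated precisely at $n=1$. This failure is structural, reflecting the $SO(3)$-invariance of \eqref{Euler non rot Sphere}: every rigid rotation $A\cdot \mathbf{r}$, $A\in\R^3$, is a steady solution, and tilting the rotation axis away from the polar direction produces a two-parameter family of non-zonal steady solutions arbitrarily close to $\a Y_1^0=\a a\cos\theta$, whose vorticities live in the $Y_1^{\pm 1}$ modes. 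The hypothesis $\langle\overline{\O},Y_1^m\rangle_{L^2}=0$ for $|m|=1$ is exactly what is needed to rule out these exceptional branches.

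My plan is to revisit the proof of Theorem \ref{main thm 2 rigidity Y_1^0} and trade the missing $n=1$ spectral bound for the orthogonality hypothesis. That proof should reduce the rigidity question to inverting the linearised steady operator at $\a Y_1^0$ mode-by-mode in the spherical harmonic basis; its $n$-th eigenvalue is essentially $L_n=2a(\a+2\U)-n(n+1)(a\a-c)$, and the spectral condition is $L_n\neq 0$. With $\U=c=0$ one has $L_n=a\a(2-n(n+1))$, so $L_1=0$ while $|L_n|\geq 4|a\a|$ for all $n\geq 2$. The orthogonality hypothesis forces the $(n,m)=(1,\pm 1)$ Fourier coefficients of the perturbation $\overline{\O}+2\a Y_1^0$ to vanish a priori, while the $(1,0)$ coefficient is zonal and is absorbed into the amplitude. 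Inversion of the linearised operator is therefore needed only on the complementary subspace, where $|L_n|$ is uniformly bounded below.

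The argument is then closed by running the iteration scheme of Theorem \ref{main thm 2 rigidity Y_1^0} on the closed subspace $X:=\{f\in H^4(\Sph):\langle f,Y_1^{\pm 1}\rangle_{L^2}=0\}$. Since the base flow is zonal, the linearisation acts diagonally in the azimuthal index and preserves $X$; the nonlinear Poisson bracket $\{\overline{\Psi},\overline{\O}\}$ can in principle generate $Y_1^{\pm 1}$ content through Clebsch--Gordan interactions $\{Y_n^{\,m},Y_{n'}^{\,m'}\}$ with $m+m'=\pm 1$ and $|n-n'|\leq 1$, so the main technical point is to verify that projecting the iterate back onto $X$ at each step preserves the fixed-point estimate. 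I expect this to reduce to showing that these spurious contributions are superlinear in the perturbation and can therefore be absorbed using the uniform bound $|L_n|\gtrsim |a\a|$ on $n\geq 2$, yielding convergence to a zonal limit and hence the conclusion of the corollary.
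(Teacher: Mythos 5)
Your key idea is exactly the paper's: with $\U=c=0$ condition \eqref{spec cond} fails only at $n=1$, the $SO(3)$ rigid-rotation family explains why, and the orthogonality hypothesis removes precisely the $Y_1^{\pm 1}$ kernel so that the coercive estimate for $\L$ is restored on the complementary subspace (the paper records it as $\V\partial_\varphi\o\V_{L^2}\lesssim\V\L\o\V_{L^2}$, with $\L\o = a\a(1+2\D^{-1})\partial_\varphi\o$). You also correctly observe that $|L_n|=|a\a||2-n(n+1)|\geq 4|a\a|$ for $n\geq 2$, so the inverse of the linearisation is uniformly bounded off the problematic modes.

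Where you drift from the paper is in the final step. You describe ``running the iteration scheme of Theorem \ref{main thm 2 rigidity Y_1^0}'' on the closed subspace $X$ and worry about the nonlinear Poisson bracket generating $Y_1^{\pm 1}$ content at each step of the iteration and about projecting the iterate back onto $X$. But the proof of Theorem \ref{main thm 2 rigidity Y_1^0} is not a fixed-point iteration at all --- that is the technique of Theorem \ref{main thm 1}. The rigidity proof is a one-shot contradiction argument: one writes the steady equation as $\L\o = -\,(u\cdot\nabla\o + \cdots)$, applies the coercive bound for $\L$ to the left side, bounds the nonlinear terms by $\V\partial_\varphi\o\V_{L^2}\V\widetilde\o\V_{\dot H^4}$, and concludes $\partial_\varphi\o=0$ once $\V\widetilde\o\V_{\dot H^4}$ is small. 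In this argument the solution $\overline\O$ is given a priori, its orthogonality to $Y_1^{\pm1}$ is a hypothesis (and passes directly to the non-zonal part $\o$ since $Y_1^0$ and $\widetilde\o_0$ are zonal), and the spherical-harmonic content of the nonlinear term is irrelevant --- there is no iterate to project. So the technical concern you flag as ``the main point'' does not arise; once the coercive estimate is recovered, the proof of the theorem transfers verbatim, which is exactly what the paper asserts in its short proof of Corollary \ref{main corollary 3}.
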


\subsection{Perspectives}
Below we present a short literature review on properties of steady solutions to the Euler equations set in both a rotating sphere and planar domains. Afterwards, relate the results of this paper with the current state-of-the-art of the field.

\subsubsection{Euler equations on a 2D flat domain}
In 1907, Orr \cite{Orr} discovered a mixing mechanism that damps the non-shear component of inviscid Euler solutions close to the Couette flow in a channel. This mixing mechanism, currently known as \emph{vorticity mixing}, produces a key effects on the dynamics of the flow. The mixing of the vorticity produces \emph{inviscid damping}, a phenomenon for which the velocity is damped. Later, this vorticity mixing was also seen to be present in rotating flat domains such as $\R^2$ or $\T^2$.

In the past few years there has been a huge development on the theory of inviscid damping. Linear results for flows near Couette, stricly monotone, radial and Kolmogorov flows can be found in \cite{Bedrossian-CotiZelati-Vicol, CotiZelati-Zillinger, Jia, Lin-Yang-Zhu, Wei-Zhang-Zhao-20, Wei-Zhang-Zhu, Zillinger-15, Zillinger-17}. However, the complete dynamics are described by the full nonlinear problems, which are considerably much harder. See \cite{Bedrossian-Masmoudi, Bedrossian-Germain-Masmoudi, Ionescu-Jia-20b, Masmoudi-Zhao-20b, Gallay} for non-linear inviscid damping results for solutions near some shear and radial flows. 

The study on the local structure of steady solutions has also attracted recent attention, see \cite{Choffrut-SzekelyhidiJr, Choffrut-Sverak, Izosimov-Khesin, Nadirashvili}. The domain in which the motion takes place actually plays an important role in the geometrical properties of the stationary solutions. For example, Hamel and Nadirashvili  proved that in a strip where the velocity has no stagnation points, the flow must be shear, see \cite{Hamel-Nadirashvili-19b} and the references therein for this and similar results in the half-plane and radial domains. These results confer the idea that certain steady solutions adapt to the geometry and symmetries of the domain they occupy. Further statements in this direction are given in  \cite{Constantin-Drivas-Ginsberg, GomezSerrano-Park-Shi-Yao}. Similarly, a current line of research investigates if steady solutions nearby some background stationary state inherit geometrical properties of this background state. In the case where the fluid domain is a channel, this motivates the following definition: We say that a background shear flow $\u$ is
\begin{itemize}
\item  \emph{Rigid}, if all steady solutions sufficiently near $\u$ are themselves shear flows.
\item \emph{Flexible} if, on the contrary, there exists steady solutions arbitrarily close to $\u$ that are not shear flows.
\end{itemize}
These definitions highly depend on the metric used to measure the distances. Moreover, whether the flow is rigid or flexible has quite substantial dynamical consequences. For instance, flexibility of a shear flow $\u$ in a certain metric space automatically rules out the possibility of non-linear inviscid damping towards a nearby shear flow for all initial perturbations arbitrarily close to the base shear flow in that space. Indeed, one could take as an initial condition the non-shear stationary solution. Results on this geometrical property are available for some basic shear flows and highlight the role of the regularity: Lin and Zeng answered the rigidity/flexibility dichotomy for the Couette flow in the periodic channel in \cite{Lin-Zeng}, Castro and Lear proved similar results for Couette in a periodic strip in \cite{Castro-Lear} and partial answers for the Poiseuille and Kolmogorov flows were obtained by Coti Zelati, Elgindi and Widmayer in \cite{CotiZelati-Elgindi-Widmayer-20}. 

In \cite{Lin-Zeng}, the authors proved that in the periodic channel steady solutions whose vorticity is sufficiently close to that of Couette in $H^s$ with $s>3/2$ must be shear. They also constructed non-shear steady solutions whose vorticity is arbitrarily close to the Couette flow vorticity in $H^s$, with $s<3/2$, implying that inviscid damping to a shear flow is not true in low regularity. In \cite{Castro-Lear}, the authors obtained a new family of non-trivial (non-shear) and smooth travelling waves for the 2D Euler equation in a periodic strip, with the associated vorticity being arbitrarily close to Couette in $H^s$, with $s<3/2$, also denying the possibility of non-linear inviscid damping back to a shear flow. 

In \cite{CotiZelati-Elgindi-Widmayer-20} the authors showed rigidity of the Poiseuille flow in the periodic channel for solutions close in vorticity in $H^{5+}$ regularity and rigidity of the Kolmogorov flow on the rectangular torus $\T^2_\delta=[0,2\pi\delta)\times[0,2\pi)$ for solution near the Kolmogorov vorticity in  $H^{3+}$ regularity. However, the situation for the Kolmogorv flow on the square torus was shown to be completely different: the authors constructed non-trivial steady solutions (non-shear and not in the kernel of the linearized Euler equations) to the Euler equations arbitrarily near the Kolmogorov flow in analytic regularity via a fixed point argument. This, in turn, implies that the linear inviscid damping for solutions near the Kolmogorov flow on $\T^2$ obtained in \cite{Wei-Zhang-Zhao-20} cannot be extended to the non-linear level, no matter the regularity of the initial perturbation.
\subsubsection{Euler equations on a rotating sphere}
The Euler equations on the two dimensional sphere are used to model geophysical fluid dynamics. Some of its applications include meteorological predictions and the study of the motion of the atmosphere of Earth and other planets of the Solar system. A complete introduction to the theory of solutions to Navier-Stokes and Euler equations on the two dimensional unit sphere can be found in \cite{Skiba} and the references therein. See also \cite{Samavaki-Tuomela} and the references therein for a discussion regarding the Navier-Stokes and Euler equations on compact Riemannian manifolds.


Just like shear flows in the flat Euclidean setting, zonal flows are basic for understanding the long time dynamics of the equations. In this direction, non-linear Lyapunov stability for a class of stationary flows was shown in \cite{Caprino-Marchioro} and non-linear structural stability of solutions belonging to the second eigenspace of the Laplace-Beltrami operator was obtained in \cite{Wiro-Shepherd}. In \cite{Cheng-Mahalov} the authors proved that finite-time averages of solutions stay close to a subspace of zonal flows, the initial data being arbitrarily far away from that subspace, further supporting the idea that zonal flows are possible end-states for general initial configurations. These results were later extended in \cite{Taylor} to rotationally symmetric surfaces.

More recently, Constantin and Germain studied the Euler equation on a rotating sphere in \cite{Constantin-Germain}. There, they obtained that any solution $\Psi$ to \eqref{Steady F Sphere} for some $\U\in\R$ must be zonal if the corresponding non-linearity $F$ is such that $F'>-6$, and they further remark that this conditions is sharp by considering the Rossby-Haurwitz solution of degree 2.  The non-zonal solutions constructed in Theorem \ref{main thm 1} and Corollary \ref{main corollary 1} provide another non-trivial (they are not a Rossby-Haurwitz stream function) explicit example of this sharpness.

\subsubsection{Contributions of the paper and further insights}
Up to our knowledge, our results are the first to show geometrical properties on stationary solutions near zonal Rossby-Haurwitz stream functions of degree 2, the analogue of the Kolmogorov flow in the planar case. Indeed, Theorem \ref{main thm 2 rigidity Y_1^0} is a spherical version of the rigidity of the Kolmogorov flow in $\T^2_\delta$, while Theorem \ref{main thm 1} is in the same spirit as the flexibility of the Kolmogorov flow in $\T^2$.

A direct consequence of this flexibility is that inviscid damping towards a zonal flow for solutions to the Euler equation on the rotating sphere whose stream function is close to the Rossby-Haurwitz solution $\Psi_*=\b Y_2^0+\U Y_1^0$, for $\b \neq 0$, is not true, independently of the regularity assumptions and of the rotation $\U$. It is also worth remarking here that the steady solutions obtained in Theorem \ref{main thm 1} (and Corollary \ref{main corollary 1}) do not contradict the results in \cite{Cheng-Mahalov} mentioned above for large rotations $\U$ because, despite being non-zonal, their time-averages (that is, themselves) are $\e$ close to the zonal flows $\b Y_2^0+\U Y_1^0$ and $\a Y_1^0$, respectively.

While Theorem \ref{main thm 2 rigidity Y_1^0} and the series of results Corollary \ref{main corollary 1}, Corollary \ref{main corollary 2} and Corollary \ref{main corollary 3} provide some description of the set of  steady solutions to the Euler equations on a (rotating) sphere which are close to the rigid rotation generated by the stream-function $\a Y_1^0$, the complete picture is far from being understood. Precisely, for the Euler equations on a sphere at rest, the set of solutions close to the rigid rotation $\a Y_1^0$ is very rich: On one hand, from Corollary \ref{main corollary 2} there are non-trivial (non-zonal) travelling wave solutions arbitrarily close to the rigid rotation, while on the other hand, from Corollary \ref{main corollary 3} all steady states orthogonal to $Y_1^m$ for $|m|=1$ and sufficiently close to the rigid rotation are zonal. This plethora of phenomena hint at an even richer long-time behaviour of general perturbations of the rigid rotation under the dynamics of the Euler equations on a non-rotating sphere.

\subsection{Organization of the Article}
We begin by providing the basic concepts of differential geometry, spherical harmonics and functional spaces that we will repeatedly use throughout the article in Section \ref{Preliminaries}. Next, we prove Theorem \ref{main thm 1} in Section \ref{sec main thm 1} by first showing an analogous result for functions in $H^2$ and then upgrading up to analytic regularity. Finally, we show Theorem \ref{main thm 2 rigidity Y_1^0} in Section \ref{sec rigidity}.

\section{Preliminaries}\label{Preliminaries}
In this section we introduce essential concepts for the development of the manuscript. We begin by defining the main differential geometric notions, such as the set of spherical coordinates we will use and the differential operators we will consider throughout the paper. Afterwards, we give the precise definition and properties of the eigenfunctions of the Laplace-Beltrami operator in $\Sph$, the so-called spherical harmonics and finally we provide the definition of Sobolev and Gevrey spaces on the sphere.
\subsection{Basic definitions on differential geometry}
We usually parametrize the two dimensional unit sphere by the standard colatitude-longitude spherical coordinates
\begin{equation*}
(\theta,\varphi)\mapsto (\sin\theta\cos\varphi,\sin\theta\sin\varphi,\cos\theta), \quad (\theta,\varphi)\in (0,\pi)\times (0,2\pi),
\end{equation*}
which covers $\Sph$ except for the half circle $\lbrace \theta\in(0,\pi),\,\varphi=0\rbrace$ and so we complement it with the chart
\begin{equation}\label{equator chart}
(\widetilde{\theta},\widetilde{\varphi})\mapsto (-\sin\widetilde{\theta}\cos\widetilde{\varphi}, -\cos\widetilde{\theta}, -\sin\widetilde{\theta}\sin\widetilde{\varphi}), \quad (\widetilde{\theta},\widetilde{\varphi})\in (0,\pi)\times (0,2\pi),
\end{equation}
which removes the equatorial half circle $\lbrace \theta=0,\, \varphi\in (\frac{\pi}{2},\frac{3\pi}{2})\rbrace$, to obtain a smooth atlas for $\Sph$. In the sequel we mainly work with the usual colatitude-longitude parametrization and we explicitly remark when we work with the other chart. The Riemannian metric of $\Sph$ in the colatitude-longitude chart is given by
\begin{equation*}
g(\theta,\varphi)=\begin{pmatrix}1&0\\0&\sin^2\theta\end{pmatrix},
\end{equation*}
from which one obtains that the vectors
\begin{equation*}
\textbf{e}_\theta=\partial_\theta, \quad \textbf{e}_\varphi = \frac{1}{\sin\theta}\partial_\varphi,
\end{equation*} 
form an orthonormal basis of the tangent space $T_p\Sph$ at $p\in \Sph\setminus\lbrace N,\,S\rbrace$, where $N$ and $S$ denote the North and South poles of the sphere, respectively. Given a scalar function $f:\Sph\rightarrow \R$, the differential operators gradient and Laplace-Beltrami are given by
\begin{equation*}
\begin{split}
\nabla f=\partial_\theta f \textbf{e}_\theta +\frac{1}{\sin\theta}\partial_\varphi f \textbf{e}_\varphi,\quad \D f&=\frac{1}{\sin\theta}\partial_\theta(\sin\theta \partial_\theta f)+\frac{1}{\sin^2\theta}\partial^2_{\varphi}f.
\end{split}
\end{equation*}
We further define the $-\frac{\pi}{2}$-rotation $J$ in the tangent space given by $J\textbf{e}_\theta=-\textbf{e}_\varphi$ and $J\textbf{e}_\varphi=\textbf{e}_\theta$, from which we set $\nabla^\perp f:= J\nabla f$. Similarly, the divergence and rotational operators for a vector field $\u=u_\theta \textbf{e}_\theta+u_\varphi \textbf{e}_\varphi$ are defined by
\begin{equation*}
\begin{split}
\text{div}(\u)&=\frac{1}{\sin\theta}\partial_\theta(\sin\theta u_\theta) +\frac{1}{\sin\theta}\partial_\varphi u_\varphi,\quad \text{curl}(\u)=-\frac{1}{\sin\theta}\partial_\theta(\sin\theta u_\varphi) +\frac{1}{\sin\theta}\partial_\varphi u_\theta.
\end{split}
\end{equation*}
Additionally, the differential volume form is $\d \Sph=\sin\theta\, \d\theta\,\d\varphi$. We refer the reader to \cite{Cheng-Mahalov, Lee-2, Skiba} for a comprehensive treatment on the details of the differential geometric viewpoint of the Euler equation and \cite{Miura} for the deduction of the Euler equations in vorticity formulation posed in the two dimensional unit sphere.
\subsection{Spherical Harmonics} 
The real spherical harmonics of degree $n\geq 0$ are the eigenfunctions of the negative spherical Laplace-Beltrami operator $-\D$ corresponding to the eigenvalue $\lambda_n=n(n+1)$, see \cite{Atkinson-Han, Muller, Skiba}. Each eigenvalue $\lambda_n$ has multiplicity $2n+1$ for each $n\geq0$ and the corresponding real spherical harmonics are given by
\begin{equation*}
Y_n^m=Y_n^m(\theta,\varphi)=\left\{\begin{array}{lc} 
(-1)^m\sqrt{2}\sqrt{\dfrac{2n+1}{4\pi}\dfrac{(n-m)!}{(n+m)!}}P_n^{|m|}(\cos\theta)\sin(|m|\varphi),&\text{if }m<0,\\[5pt]
\sqrt{\dfrac{2n+1}{4\pi}}P_n(\cos\theta),&\text{if }m=0, \\[5pt]
(-1)^m\sqrt{2}\sqrt{\dfrac{2n+1}{4\pi}\dfrac{(n-m)!}{(n+m)!}}P_n^{m}(\cos\theta)\cos(m\varphi),&\text{if }m>0,
\end{array}\right.
\end{equation*}
where $m=0,\pm 1, \dots, \pm n$. For $n\geq0$, $P_n$ are the Legendre polynomials defined by
\begin{equation*}
P_n(s)=\frac{1}{2^nn!}\frac{\d^n}{\d s^n}(s^2-1)^n,\quad s\in(-1,1),
\end{equation*}
which are solutions to the eigenvalue problem
\begin{equation}\label{eig eq Leg pol}
\frac{\d}{\d s}\left( (1-s^2)\frac{\d}{\d s}\right) P_n(s)=-n(n+1)P_n(s).
\end{equation}
For $n>0$ and $0<|m|\leq n$ the associated Legendre functions $P_n^m$ are given by
\begin{equation*}
P_n^m(s)=(-1)^m(1-s^2)^{m/2}\dfrac{d^m}{ds^m}P_n(s).
\end{equation*}
The set of all spherical harmonics $\lbrace Y_n^m:n\geq 0,\,|m|\leq n\rbrace$ forms an orthonormal basis for $L^2(\Sph)$. That is, any $u\in L^2(\Sph)$ can be written as
\begin{equation*}
u=\sum_{n\geq0}\sum_{|m|\leq n}u_n^m Y_n^m, \quad u_n^m=\int_{\Sph}u\,Y_n^m\d\Sph,
\end{equation*}
where the convergence is in the $L^2(\Sph)$ sense. For each $n\geq 0$ we define the linear subspace $\textbf{Y}_n= \text{span} \lbrace Y_n^m : |m|\leq n\rbrace$, any $u\in \textbf{Y}_n$ is such that $\D u = -\lambda_n u$. In the sequel, we will mainly work with
\begin{equation*}
Y_1^0(\theta)=\frac{1}{2}\sqrt{\frac{3}{\pi}}\cos\theta,\quad Y_2^0(\theta)=\frac{1}{4}\sqrt{\frac{5}{\pi}}(3\cos^2\theta-1), \quad Y_2^2(\theta,\varphi)=\frac{1}{4}\sqrt{\frac{15}{\pi}}\sin^2\theta\cos(2\varphi).
\end{equation*}

\subsection{Function spaces on the unit sphere}\label{function spaces}
Let $A=-\D+1$ and $\mu_n:=\lambda_n+1$, for all $n\geq0$. For any $k\in \mathbb{N}$ we define the inhomogeneous Sobolev space $H^k(\Sph)$ using the domain of definition of $A^{k/2}$, see \cite{Cao-Rammaha-Titi-2, Skiba}. More precisely, we set
\begin{equation*}
H^k(\Sph):=\left\lbrace u\in L^2(\Sph)\,:\, u=\sum_{n\geq0}\sum_{|m|\leq n}u_n^mY_n^m, \; \sum_{n\geq0}\sum_{|m|\leq n}\mu_n^k|u_n^m|^2<\infty \right\rbrace
\end{equation*}
together with the norm 
\begin{equation*}
\V u \V_{H^k}^2:=\sum_{n\geq0}\sum_{|m|\leq n}\mu_n^k|u_n^m|^2.
\end{equation*}
We further define the homogeneous Sobolev space $\dot{H}^k(\Sph)$ as the completion of $C^\infty_0(\Sph)$, the space of smooth functions with zero average on the sphere, with respect to the norm
\begin{equation*}
\V u \V_{\dot{H}^k}^2:=\sum_{n\geq1}\sum_{|m|\leq n}\lambda_n^k|u_n^m|^2.
\end{equation*}
We also introduce the analytic Gevrey class of functions $\G^{k/2}_{\lambda}(\Sph)$, for $\lambda>0$ given by the domain of $A^{k/2}e^{\lambda A^{1/2}}$. Indeed, 
\begin{equation*}
\G^{k/2}_{\lambda}(\Sph):=\left\lbrace u\in L^2(\Sph)\,:\, u=\sum_{n\geq1}\sum_{|m|\leq n}u_n^mY_n^m, \; \sum_{n\geq1}\sum_{|m|\leq n}\mu_n^ke^{2\lambda\mu_n^{1/2}}|u_n^m|^2<\infty \right\rbrace,
\end{equation*}
and for $u\in\G^{k/2}_\lambda$ we consider the norm
\begin{equation}\label{def gevrey norm}
\V u \V_{\G^{k/2}_\lambda}^2:=\sum_{n\geq1}\sum_{|m|\leq n}\mu_n^ke^{2\lambda\mu_n^{1/2}}|u_n^m|^2.
\end{equation}
Note that the space of real analytic functions is such that $C^\o(\Sph)=\bigcup_{\lambda>0}\G^{k/2}_\lambda$, for any $k\geq0$, see \cite{John}. Moreover, we also have the following.
\begin{lemma}[\cite{Cao-Rammaha-Titi-2}]
For $\lambda\geq0$ and $k>3/2$, the Hilbert space $\G^{k/2}_\lambda$ is a topological algebra. More precisely, if $u,v\in\G^{k/2}_\lambda$, then $uv\in\G^{k/2}_\lambda$ and
\begin{equation}\label{algebra ineq}
\V uv \V_{\G^{k/2}_\lambda}\leq C_k \V u \V_{\G^{k/2}_\lambda} \V v \V_{\G^{k/2}_\lambda},
\end{equation}
with $C_k>0$ depending only on $k$.
\end{lemma}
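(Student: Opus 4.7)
The plan is to follow the classical two-step strategy for Gevrey algebra inequalities (in the spirit of Foias-Temam), adapted to the spherical setting: first, exploit the sub-additivity of the exponential analytic weight along the Clebsch-Gordan support to reduce the Gevrey algebra bound to a Sobolev algebra bound; second, establish the Sobolev algebra property $\V fg\V_{H^k}\leq C_k\V f\V_{H^k}\V g\V_{H^k}$ for $k>3/2$ through a Moser-type inequality combined with the embedding $H^k(\Sph)\hookrightarrow L^\infty(\Sph)$.

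Expanding the product in spherical harmonics yields
\[
(uv)_N^M = \sum_{n,m,n',m'} u_n^m\,v_{n'}^{m'}\,\langle Y_n^m Y_{n'}^{m'},Y_N^M\rangle_{L^2(\Sph)},
\]
and the $SO(3)$-representation selection rules (Clebsch-Gordan) imply that this triple integral vanishes unless $|n-n'|\leq N\leq n+n'$ and $M=m+m'$. On this support I would verify the key sub-additivity
\[
\mu_N^{1/2}=\sqrt{N(N+1)+1}\leq \sqrt{n(n+1)+1}+\sqrt{n'(n'+1)+1}=\mu_n^{1/2}+\mu_{n'}^{1/2},
\]
which, after squaring, reduces to the trivially true $2nn'-1\leq 2\sqrt{(n(n+1)+1)(n'(n'+1)+1)}$. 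Together with the elementary polynomial estimate $\mu_N^{k/2}\leq C_k(\mu_n^{k/2}+\mu_{n'}^{k/2})$, this provides the weight decomposition $e^{\lambda\mu_N^{1/2}}\leq e^{\lambda\mu_n^{1/2}}\,e^{\lambda\mu_{n'}^{1/2}}$, which distributes the Gevrey weight across the two factors.

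Introducing the dressed functions $\tilde u,\tilde v$ with coefficients $e^{\lambda\mu_n^{1/2}}u_n^m$ and $e^{\lambda\mu_n^{1/2}}v_n^m$, so that $\V\tilde u\V_{H^k}=\V u\V_{\G^{k/2}_\lambda}$ and analogously for $v$, the previous inequalities deliver $\V uv\V_{\G^{k/2}_\lambda}\leq C_k\V\tilde u\,\tilde v\V_{H^k}$. It then remains to prove the $H^k$ algebra property for $k>3/2$. I would obtain it from the Moser-type bound
\[
\V fg\V_{H^k}\leq C_k\bigl(\V f\V_{L^\infty}\V g\V_{H^k}+\V f\V_{H^k}\V g\V_{L^\infty}\bigr),
\]
together with the embedding $H^k(\Sph)\hookrightarrow L^\infty(\Sph)$. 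The latter is where the $k>3/2$ threshold appears: using the uniform pointwise estimate $|Y_n^m|_\infty\leq C\sqrt{2n+1}$ together with Cauchy-Schwarz in $m$ and then in $n$,
\[
\V f\V_{L^\infty}\leq C\Bigl(\sum_{n\geq 1}(2n+1)^2\mu_n^{-k}\Bigr)^{1/2}\V f\V_{H^k},
\]
and since $\mu_n\sim n^2$, the series converges precisely when $k>3/2$.

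The main obstacle is the Moser-type inequality on $\Sph$, since Littlewood-Paley theory is not as readily available as on $\R^n$ or $\T^n$. I would either argue directly by expanding $\langle A^{k/2}(fg),A^{k/2}(fg)\rangle$ via Clebsch-Gordan coefficients and carefully applying Cauchy-Schwarz in the spectral convolution, or use commutator estimates for $A^{k/2}=(-\D+1)^{k/2}$ of Kato-Ponce type adapted to the Laplace-Beltrami calculus on $\Sph$. In either case the estimates above close the proof, but the cleanest route is to invoke the result already established in the cited work of Cao-Rammaha-Titi.
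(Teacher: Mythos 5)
The paper does not prove this lemma; it simply cites it from the reference \cite{Cao-Rammaha-Titi}, so there is no in-paper argument to compare against. Your outline contains the right ingredients (the Clebsch--Gordan selection rules, the sub-additivity $\mu_N^{1/2}\leq\mu_n^{1/2}+\mu_{n'}^{1/2}$ on the support, and the $H^k\hookrightarrow L^\infty$ embedding), but the bridge between them --- the dressed-function reduction --- has a genuine gap. On the torus the Fourier coefficients of a product are a bona fide discrete convolution with a nonnegative kernel, so replacing coefficients by their absolute values and dressing with $e^{\lambda|\xi|}$ yields a coefficient-by-coefficient majorant; that is what makes the Foias--Temam trick work. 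On the sphere the role of the kernel is played by the Gaunt integrals $\langle Y_n^m Y_{n'}^{m'},Y_N^M\rangle$, which change sign. Consequently the coefficient of $\tilde u\tilde v$ is \emph{not} a majorant of $e^{\lambda\mu_N^{1/2}}(uv)_N^M$ (cancellations can make it smaller), and the assertion ``the previous inequalities deliver $\V uv\V_{\G^{k/2}_\lambda}\leq C_k\V\tilde u\tilde v\V_{H^k}$'' is precisely what remains unproven. To close it one has to argue spectrally: for instance dualize against $w\in L^2$, restrict the $N$-sum to the Clebsch--Gordan window $|N-n'|\leq n$ with $n\leq n'$, use orthogonality in $N$ so that $\sum_N\V P_N(P_n u\,P_{n'}v)\V_{L^2}^2=\V P_n u\,P_{n'}v\V_{L^2}^2$, apply the addition-theorem Bernstein bound $\V P_n u\V_{L^\infty}\lesssim\sqrt{n+1}\,\V P_n u\V_{L^2}$ on the low-frequency factor, and Cauchy--Schwarz $w$ over the window of length $\sim n$. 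That is essentially the content of the cited work, not something the dressed-function shortcut delivers.

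Two smaller points. First, you identify the Moser inequality on $\Sph$ as the main obstacle, but that bound is routine on a compact manifold (local charts and a partition of unity reduce it to $\R^2$), and the sharp Sobolev algebra threshold there is only $k>1=\tfrac12\dim\Sph$. Second, there is a slip in your embedding computation: by the addition theorem $\sum_m|Y_n^m(x)|^2=\tfrac{2n+1}{4\pi}$, so the correct series is $\sum_n(2n+1)\mu_n^{-k}$, not $\sum_n(2n+1)^2\mu_n^{-k}$, and it converges already for $k>1$. The Lemma's stated threshold $k>3/2$ therefore does not come from the $L^\infty$ embedding; it is the extra half-power of $n$ paid for the spherical Clebsch--Gordan window width in the genuine argument sketched above.
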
 
In the sequel, we take $k=2$ and denote $\G_\lambda:=\G^1_\lambda$, a topological algebra thanks to the above lemma. Finally, for any $u,v\in L^2$ we consider the usual scalar product
\begin{equation*}
\langle u, v \rangle = \int_{\Sph} uv\,\d\Sph.
\end{equation*}

\section{Non-zonal stationary solutions near Rossby-Haurwitz}\label{sec main thm 1}
This section is devoted to the proof of Theorem \ref{main thm 1}. We begin by showing an analogous statement for stream functions belonging to $H^2$, see Theorem \ref{thm existence} below. Afterwards, we will use elliptic regularity theory to improve the regularity of the constructed steady solutions up to real analytic, see Theorem \ref{thm reg} below and section \ref{sec reg}. We follow the strategy presented in \cite{CotiZelati-Elgindi-Widmayer-20} to prove the existence of non-trivial non-shear solutions to the Euler equation arbitrarily close to the Kolmogorov flow on the square torus.

The Euler equation on a rotating sphere \eqref{Euler Sphere U} for a small steady perturbation $\o$ around the vorticity $\O_*=-6\b Y_2^0-2\U Y_1^0$ associated to the Rossby-Haurwitz stream function $\Psi_*=\b Y_2^0+\U Y_1^0$ reads
\begin{equation*}
\begin{split}
0&=\partial_t\o+\frac{1}{2}\left( 3\b\sqrt{\frac{5}{\pi}}\cos\theta+\U\sqrt{\frac{3}{\pi}}\right)\left(1+6\D^{-1}\right)\partial_\varphi\o + u\cdot\nabla\o, 
\end{split}
\end{equation*}
from which we define the linearized operator 
\begin{equation*}
\L\o:=\frac{1}{2}\left( 3\b\sqrt{\frac{5}{\pi}}\cos\theta+\U\sqrt{\frac{3}{\pi}}\right)\left(1+6\D^{-1}\right)\partial_\varphi\o.
\end{equation*}
A close inspection shows that the kernel of $\L$ is formed by zonal flows and eigenfunctions of the Laplace-Beltrami operator whose eigenvalue is $-6$, that is, the subspace $\textbf{Y}_2$. This motivates our definition of non-trivial solutions as those that are not in the kernel of the linearized operator. Indeed, $\Psi_\e=\b Y_2^0+\U Y_1^0 + \e\textbf{Y}_2$ is already a non-zonal stationary solution $\e$ close to the Rossby-Haurwitz stream function, and so we are interested in more general non-zonal solutions.

The following result shows the existence of steady non-trivial and non-zonal solutions to the rotating Euler equation on $\Sph$ arbitrarily close to $\b Y_1^0+\U Y_1^0$ in $H^2$. 
\begin{theorem}\label{thm existence}
For all $\b,\U\in \R$ such that $\b^2+\U^2>0$, there exist $c_{\U,\b}>0$ and $\e_0>0$ such that for any $0<\e\leq \e_0$ there exist functions $\Psi_\e\in H^2(\Sph)$ and $F_\e:\R\rightarrow \R$ for which
\begin{equation*}
\D \Psi_\e-4\U Y_1^0=F_\e(\Psi_\e)
\end{equation*}
and
\begin{equation*}
\V \b Y_2^0+\U Y_1^0-\Psi_\e\V_{H^2}=O(\e),
\end{equation*}
with
\begin{equation*}
\begin{split}
\langle \Psi_\e,Y_6^2\rangle&=-\b^2\frac{1}{36}\frac{45}{11\pi\sqrt{182}}c_{\U,\b}\e^2+O(\e^3), \\
\langle \Psi_\e,Y_4^2\rangle &=-\frac{1}{14}\left( \frac{3\sqrt{3}}{154\pi}(11\U^2-5\b^2) -\b^2\frac{1}{7\pi}\frac{30\U^2}{7\U^2+15\b^2} \right)c_{\U,\b}\e^2 + O(\e^3).
\end{split}
\end{equation*}
\end{theorem}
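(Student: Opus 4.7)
The plan is a Lyapunov--Schmidt bifurcation analysis, in the spirit of \cite{CotiZelati-Elgindi-Widmayer-20}, centered on the finite-dimensional kernel of the linearization. Linearizing $\Delta\Psi - 4\gamma Y_1^0 = F(\Psi)$ around $\Psi_*$ with $F(s) = -6s$ yields the operator $\mathcal{L} := \Delta + 6$, whose kernel is precisely the five-dimensional eigenspace $\mathbf{Y}_2 = \mathrm{span}\{Y_2^m : |m|\leq 2\}$. I will seek a bifurcating branch from $\Psi_*$ in the non-zonal kernel direction $Y_2^2$ of the form
$$\Psi_\varepsilon = \Psi_* + \varepsilon\, \eta_\varepsilon + \varepsilon^2 \phi_\varepsilon, \qquad \eta_\varepsilon = c_{\gamma,\beta}(\varepsilon)\, Y_2^2 \in \mathbf{Y}_2, \quad \phi_\varepsilon \in H^2(\mathbb{S}^2) \cap \mathbf{Y}_2^\perp,$$
with $c_{\gamma,\beta}(\varepsilon) = c_{\gamma,\beta} + O(\varepsilon)$. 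Simultaneously I postulate $F_\varepsilon(s) = -6s + \varepsilon^2 G_\varepsilon(s)$, where $G_\varepsilon$ is a polynomial of degree at least three (so that its Clebsch--Gordan expansion can reach the $Y_6^2$ mode) whose coefficients depend smoothly on $\varepsilon$. Using $\Delta\Psi_* - 4\gamma Y_1^0 = -6\Psi_*$ and $\mathcal{L} Y_2^2 = 0$, the stationarity equation reduces, after cancelling the common $\varepsilon^2$, to
$$\mathcal{L}\phi_\varepsilon = G_\varepsilon\bigl(\Psi_* + \varepsilon\, \eta_\varepsilon + \varepsilon^2 \phi_\varepsilon\bigr).$$

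Decompose $L^2(\mathbb{S}^2) = \mathbf{Y}_2 \oplus \mathbf{Y}_2^\perp$ with orthogonal projections $P$ and $Q = I - P$. On $\mathbf{Y}_2^\perp$ the operator $\mathcal{L}$ is a bounded isomorphism onto $\mathbf{Y}_2^\perp \cap L^2$, multiplying the mode $Y_n^m$ by $6 - n(n+1)$ (non-zero and uniformly bounded away from $0$ for $n \neq 2$), so I recast the projected equation as a fixed point
$$\phi_\varepsilon = (Q\mathcal{L})^{-1} Q\, G_\varepsilon\bigl(\Psi_* + \varepsilon\, \eta_\varepsilon + \varepsilon^2 \phi_\varepsilon\bigr),$$
and solve it by Banach contraction on a small closed ball of $H^2 \cap \mathbf{Y}_2^\perp$. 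The key analytic ingredients are the algebra property \eqref{algebra ineq} together with the embedding $H^2(\mathbb{S}^2) \hookrightarrow L^\infty$, which make the polynomial composition with $G_\varepsilon$ well-defined and Lipschitz on bounded sets. The compatibility condition $P G_\varepsilon\bigl(\Psi_* + \varepsilon\, \eta_\varepsilon + \varepsilon^2 \phi_\varepsilon\bigr) = 0$ consists of five scalar equations, but the azimuthal parity of the ansatz (only the $m = 0, \pm 2$ modes appear) trivially kills three of them. The remaining two constraints, in the $Y_2^0$ and $Y_2^2$ directions, are solved by the implicit function theorem for a pair of coefficients of $G_\varepsilon$ and for $c_{\gamma,\beta}(\varepsilon)$; non-degeneracy of the associated Jacobian is precisely where the hypothesis $\beta^2 + \gamma^2 > 0$ enters.

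The specific $O(\varepsilon^2)$ coefficients in the statement follow from expanding $\phi_\varepsilon$ around $\varepsilon = 0$ and computing its projections onto $Y_6^2$ and $Y_4^2$. The prefactors $1/36$ and $1/14$ are exactly $|(6-\lambda_6)^{-1}|$ and $|(6-\lambda_4)^{-1}|$, reflecting the action of $\mathcal{L}^{-1}$ on these modes, while the remaining numerical constants come from Clebsch--Gordan decompositions of the triple products $Y_p^0 \cdot Y_2^2$ appearing in $G_\varepsilon(\Psi_* + \varepsilon\eta_\varepsilon + \varepsilon^2\phi_\varepsilon)$. Notably the $Y_6^2$ mode is excited solely through $Y_4^0 \cdot Y_2^2$, and $Y_4^0$ arises inside $\Psi_*^2$ only from $(Y_2^0)^2$, which accounts for the overall $\beta^2$ factor in the first formula; the $Y_4^2$ coefficient receives contributions from both $Y_2^0 \cdot Y_2^2$ and $(Y_1^0)^2\cdot Y_2^2$, producing the polynomial $11\gamma^2 - 5\beta^2$, while the rational denominator $7\gamma^2 + 15\beta^2$ tracks the inversion of the compatibility Jacobian when solving for $c_{\gamma,\beta}$. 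I expect the main obstacle to be precisely this algebraic bookkeeping: the explicit Clebsch--Gordan coefficients and spherical-harmonic integrals, together with the verification that the compatibility Jacobian remains non-degenerate across all admissible $(\beta,\gamma)$ with $\beta^2+\gamma^2>0$ (in particular the degenerate regime $\beta = 0$ underlying Corollary \ref{main corollary 1}). Once $\phi_\varepsilon$ is obtained, non-zonality of $\Psi_\varepsilon$ is immediate from the leading $\varepsilon c_{\gamma,\beta} Y_2^2$ term, while non-triviality (i.e.\ $\Psi_\varepsilon \notin \mathbf{Y}_2$) follows from the explicit non-vanishing of the $Y_6^2$ coefficient when $\beta \neq 0$ and of the $Y_4^2$ coefficient in the remaining case.
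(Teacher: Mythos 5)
Your plan is structurally the same Lyapunov--Schmidt scheme the paper uses (project onto $\mathbf{Y}_2$ and its complement, contract on the orthogonal part, meet the kernel compatibility conditions by adjusting low-order coefficients of a cubic nonlinearity, and exploit azimuthal parity to discard three of the five constraints). However, there is a genuine scaling error that would make the constructed branch fail the theorem's stated asymptotics.

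With the ansatz $\Psi_\e = \Psi_* + \e\,\eta_\e + \e^2\phi_\e$, $\eta_\e = c_{\U,\b}(\e)Y_2^2$, and $F_\e(s) = -6s + \e^2 G_\e(s)$, the reduced equation is $\mathcal{L}\phi_\e = G_\e\bigl(\Psi_* + \e\eta_\e + \e^2\phi_\e\bigr)$. Expanding in $\e$: at $\e = 0$ one gets $\phi_0 = (Q\mathcal{L})^{-1}Q\,G_0(\Psi_*)$, which is \emph{zonal} because $\Psi_*$ is zonal and $G_0$ is a polynomial. Hence $\langle\phi_0, Y_6^2\rangle = \langle\phi_0, Y_4^2\rangle = 0$, so $\langle\phi_\e, Y_6^2\rangle = O(\e)$, and therefore
\begin{equation*}
\langle\Psi_\e, Y_6^2\rangle \;=\; \e^2\langle\phi_\e, Y_6^2\rangle \;=\; O(\e^3),
\end{equation*}
which contradicts the $O(\e^2)$ leading term with nonvanishing coefficient asserted in the theorem. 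Your remark that ``the specific $O(\e^2)$ coefficients follow from expanding $\phi_\e$ around $\e=0$'' silently presumes $\langle\phi_0, Y_6^2\rangle\neq 0$, which is false. The paper avoids this by putting \emph{both} corrections at order $\e$: $\Psi_\e = \Psi_* + \e(\psi + Y_2^2)$ with $\psi\perp\mathbf{Y}_2$ uniformly bounded, and $F_\e(s) = -6s + \e f(s)$, so that $\psi = \psi_0 + \e\psi_1 + \cdots$ with $\psi_0$ zonal and $\psi_1$ carrying the $Y_6^2$ and $Y_4^2$ modes; then $\langle\Psi_\e, Y_6^2\rangle = \e\langle\psi_\e, Y_6^2\rangle = \e^2\langle\psi_1, Y_6^2\rangle + O(\e^3)$, matching the statement. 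Your ansatz is therefore off by one power of $\e$ and needs $\e\phi_\e$ in place of $\e^2\phi_\e$ (and $\e G_\e$ in place of $\e^2 G_\e$).

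A smaller imprecision: the paper does \emph{not} solve for $c_{\U,\b}$ via the compatibility conditions. The two constraints in \eqref{compt cond} determine the linear coefficient $A$ and quadratic coefficient $B$ of $f$ as functionals $A(\psi;\e)$, $B(\psi;\e)$ (Lemma \ref{lemma def A B}), and $c_{\U,\b}$ is fixed \emph{separately}, $c_{\U,\b} = \tfrac12(1+\U^2+\b^2)^{-2}$, precisely so that the Lipschitz estimates of \eqref{Lip bounds A B} make the map $K_\e$ a contraction with constant $\tfrac12 + O(\e)$. The denominator $7\U^2+15\b^2$ that you attribute to the ``inversion of the compatibility Jacobian when solving for $c_{\U,\b}$'' in fact appears in the formula \eqref{def b0} for $b_0(\psi)$, i.e.\ from solving the $2\times 2$ linear system for $A$ and $B$. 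Making $A,B$ explicit functionals of $\psi$, rather than treating all unknowns simultaneously via the implicit function theorem, is what lets the paper run a clean Banach fixed-point on $\psi$ alone; if you keep the IFT framing you will need to say how the contraction for $\phi_\e$ and the IFT for the coefficients are decoupled or iterated.
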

On the other hand, the next result asserts that the functions obtained above are smoother than just $H^2(\Sph)$, in fact they are real analytic. Furthermore, the following result shows that these new functions $\Psi_\e$ are arbitrarily close to $\b Y_2^0+\U Y_1^0$ in the Gevrey class of functions $\G_\lambda$, for some $\lambda>0$.
\begin{theorem}\label{thm reg}
The solution $\Psi_\e\in H^2$ given by Theorem \ref{thm existence} is real analytic, that is, $\Psi_\e\in C^\o(\Sph)$. Moreover, there exists $\lambda>0$ and $M>0$, both independent of $\e>0$ such that
\begin{equation*}
\V \b Y_2^0 +\U Y_1^0 - \Psi_\e \V_{\G_\lambda}\leq M\e, \text{ for all }0<\e\leq \e_0.
\end{equation*}
\end{theorem}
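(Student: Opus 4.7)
The goal is to promote the $H^2$ solution $\Psi_\e$ of Theorem \ref{thm existence} to the real-analytic category with a quantitative Gevrey bound of size $O(\e)$. I would proceed in two stages: first a qualitative elliptic bootstrap, then a uniform fixed-point in the Gevrey topology.

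For the qualitative stage, recall that $\Psi_\e \in H^2(\Sph)$ satisfies the semilinear elliptic equation $\D \Psi_\e = F_\e(\Psi_\e) + 4\U Y_1^0$, where $F_\e$ is real analytic (as built in the proof of Theorem \ref{thm existence}). Using that $H^k(\Sph)$ is a Banach algebra for $k > 1$ and embeds continuously into $C^0(\Sph)$, the composition $F_\e(\Psi_\e)$ inherits the regularity of $\Psi_\e$. Iterating standard elliptic regularity for $\D$ on the closed manifold $\Sph$ (which gains two derivatives per step), one obtains $\Psi_\e \in H^k(\Sph)$ for every $k$, hence $\Psi_\e \in C^\infty(\Sph)$. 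Morrey's theorem on analytic regularity for semilinear elliptic equations with analytic nonlinearity on a real-analytic manifold then gives $\Psi_\e \in C^\o(\Sph) = \bigcup_{\lambda > 0} \G_\lambda$.

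This qualitative step provides $\Psi_\e \in \G_{\lambda_\e}$ for some possibly $\e$-dependent radius, but the estimate sought requires $\lambda$ and $M$ independent of $\e$. To upgrade, I would rerun the fixed-point argument underlying Theorem \ref{thm existence} in the Gevrey topology. Writing $\Psi_\e = \b Y_2^0 + \U Y_1^0 + \phi_\e$, the perturbation $\phi_\e$ satisfies an equation of the form $\phi_\e = T(\phi_\e; \e)$ where $T$ combines the pseudo-inverse of the linearized operator $\L$ on $(\ker \L)^\perp$, a finite-dimensional Lyapunov--Schmidt correction on $\ker \L$, and the quadratic nonlinearity $u \cdot \nabla \o$. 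The three structural facts about $\G_\lambda$ that make the same contraction argument work are: the algebra inequality \eqref{algebra ineq} controls the nonlinear term; the operators $\partial_\varphi$ and $1 + 6\D^{-1}$ are bounded Fourier multipliers commuting with the weight $e^{\lambda A^{1/2}}$; and $\ker \L$ is spanned by spherical harmonics of bounded degree (zonal functions together with $\textbf{Y}_2$), so the associated finite-dimensional projections are bounded on $\G_\lambda$. For $\lambda > 0$ sufficiently small, the same contraction produces a fixed point $\widetilde{\phi}_\e \in \G_\lambda$ with $\V \widetilde{\phi}_\e \V_{\G_\lambda} = O(\e)$, and uniqueness in the ambient $H^2$ ball (via the continuous embedding $\G_\lambda \hookrightarrow H^2$) forces $\widetilde{\phi}_\e = \phi_\e$.

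The main technical obstacle I foresee is the uniform boundedness of the pseudo-inverse of $\L$ on $\G_\lambda$ independently of small $\lambda$. Multiplication by the zonal factor $\tfrac{1}{2}(3\b\sqrt{5/\pi}\cos\theta + \U\sqrt{3/\pi})$ couples spherical-harmonic shells $\textbf{Y}_n$ to $\textbf{Y}_{n \pm 1}$, while $(1 + 6\D^{-1})\partial_\varphi$ acts diagonally with symbol bounded below in modulus away from $n = 2$. Inverting $\L$ away from its kernel therefore amounts to inverting a tridiagonal operator in the shell index $n$. Since $|\mu_n^{1/2} - \mu_{n \pm 1}^{1/2}|$ is uniformly bounded in $n$, the exponential weight $e^{\lambda \mu_n^{1/2}}$ is preserved up to a factor $e^{C\lambda}$ that can be absorbed into the final constant, and a shell-by-shell analysis delivers the required $\G_\lambda$-bound uniformly in small $\lambda$.
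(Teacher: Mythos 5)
Your proposal splits into a qualitative elliptic-regularity step (which agrees with the opening of the paper's Lemma \ref{lemma analytic regularity}) and a quantitative step that diverges from the paper: you propose to re-run the Lyapunov--Schmidt contraction in the Gevrey topology, whereas the paper instead applies a Gevrey-radius differential inequality directly to the already-constructed $H^2$ fixed point. Concretely, once $\psi_\e \in C^\o$ is established qualitatively, the paper rewrites $A\psi_\e = \sum_n a_n \psi_\e^n$ with spherical-harmonic coefficients, differentiates $\V\psi_\e\V_{\G_\lambda}$ with respect to $\lambda$, bounds $\tfrac{d}{d\lambda}\V\psi_\e\V_{\G_\lambda}$ by $\bigl\V\sum a_n \psi_\e^n\bigr\V_{\G_\lambda}$ via the Gevrey algebra inequality \eqref{algebra ineq}, and integrates the resulting Riccati-type inequality to get $\V\psi_\e\V_{\G_\lambda} \leq 4\V\psi_\e\V_{H^2}$ for $\lambda\leq\lambda_*$ with $\lambda_*$ independent of $\e$. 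This is both shorter and structurally cleaner than your route, because it requires no re-verification of the contraction mapping in a new function space and inherits the $\e$-uniform bound directly from the $\e$-uniform $H^2$ bound $\V\psi_\e\V_{H^2} \leq 150(1+\U^2+\b^2)^2$ built into the set $X$.

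Your route is plausible in spirit, but the technical obstacle you flag -- the tridiagonal structure of a pseudo-inverse of $\L$ on $\G_\lambda$ -- is misdirected. The fixed-point map $K_\e$ of Proposition \ref{Prop Contraction} never inverts $\L$; it inverts $(\D+6)$ on $\textbf{Y}_2^\perp$, which is a \emph{diagonal} Fourier multiplier on spherical harmonics and so commutes exactly with $e^{\lambda A^{1/2}}$, giving the $\G_\lambda$-bound for free. The actual work of handling $\ker\L$ is absorbed into the Lyapunov--Schmidt corrections $A(\psi;\e),\,B(\psi;\e)$, which are finite $L^2$ pairings and hence automatically bounded and Lipschitz on $\G_\lambda$. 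If you do pursue your route, the genuine subtlety is that the algebra property of $\G^{k/2}_\lambda$ requires $k>3/2$, so bounding $\V\widetilde{K_\e}(\psi)\V_{\G^0_\lambda}$ before applying $(\D+6)^{-1}$ should be done via $\V\cdot\V_{\G^0_\lambda} \leq \V\cdot\V_{\G^1_\lambda}$ and the $\G^1_\lambda$ algebra, with care to ensure the resulting contraction constant still closes for the paper's choice of $c_{\U,\b}$; the $\lambda$-ODE method sidesteps exactly this bookkeeping.
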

\subsection{Proof of Theorem \ref{thm existence}}
We show Theorem \ref{thm existence} in the following subsections. We begin in \ref{Setup} by setting up the basis that will lead to a contraction argument in \ref{Contraction}. Thereafter, in \ref{Non-triviality} we use the explicit expression of our functions to deduce the presence of non-trivial modes. This guarantees that the associated flows are non-zonal and neither in $\textbf{Y}_2$.
\subsubsection{Setup of the argument}\label{Setup} The existence of a stationary state near $\Psi_*=\b Y_2^0 + \U Y_1^0$ relies on constructing the solution $\Psi_\e$ perturbatively from the stream function $\Psi_*$ and is based on the fact that $\Psi_*$ satisfies $\Delta \Psi_*-4\U Y_1^0=F_*(\Psi_*)$ with $F_*(s)=-6s$. It is then natural to make the ansatz
\begin{equation*}
\Psi_\e=\Psi_*+\e\psi,\quad F_\e=F_*+\e f,
\end{equation*}
which produces a nonlinear elliptic equation for $\psi$, with $f$ to be determined as well,
\begin{equation*}
\D \psi + 6\psi = f(\Psi_*+\e\psi).
\end{equation*}
Note that the operator $\D+6$ is, in general, not invertible, and while this constitutes some difficulties, it also permits to introduce via $\psi$ elements of the kernel $\ker \L$, for instance, $Y_2^2$, which gives the following equation to be solved
\begin{equation}\label{eq psi Y22}
\D\psi+6\psi=f(\Psi_*+\e\psi+\e Y_2^2),
\end{equation}
with $\psi\perp\ker(\D+6)$. We choose $f$ as a cubic polynomial $f=f(z)=Az+Bz^2+c_{\U,\b} z^3$ with coefficients $A,B\in\R$, and $c_{\U,\b}>0$, where $A,B$ will be determined as functionals of $\psi$ and $\e>0$ and $c_{\U,\b}$ will be chosen later on. We obtain
\begin{equation}\label{eq f}
\begin{split}
\D\psi +6\psi=&A\P+B\P^2+c_{\U,\b}\P^3\\
&+\e(\psi+Y_2^2)\big( A+2B\P+3c_{\U,\b}\P^2\big)\\
&+R(B,c_{\U,\b},\psi,\e),
\end{split}
\end{equation}
where
\begin{equation*}
R(B,c_{\U,\b},\psi,\e)=\e^2(\psi+Y_2^2)^2\big(B+3c_{\U,\b} \P\big)+\e^3c_{\U,\b}(\psi+Y_2^2)^3.
\end{equation*}
In order to remark the dependence of the polynomial $f$ on the coefficients $A,\,B$ and $c_{\gamma,\beta}$, we denote $f=f(A,B,c_{\gamma,\beta};z)$. A necessary condition to have solutions to the semilinear elliptic problem \eqref{eq psi Y22} is that the right hand side of \eqref{eq f} must be orthogonal to the kernel $\ker (\D+6)=\textbf{Y}_2$, so $f(A,B,c_{\U,\b};\P+\e Y_2^2 +\e\psi)$ must satisfy five orthogonality conditions.

We can reduce the number of orthogonality conditions by exploiting the symmetries of the spherical harmonics and the polynomial nature of the nonlinearity $f$. Hence, if we assume that $\psi$ is an even function  in $\varphi$, that is, $\langle\psi, Y_n^{-|m|}\rangle=0$, for all $n\geq1$ and all $0<|m|\leq n$, it is then straightforward to see that $f(A,B,c_{\U,\b};\P+\e Y_2^2 +\e\psi)$ is also even in $\varphi$ for all $A,B,c_{\U,\b}\in\R$, so that the two orthogonality conditions regarding $Y_2^{-2}$ and $Y_2^{-1}$ are automatically satisfied. 

If we also further assume that $\langle\psi, Y_n^{2k+1}\rangle=0$, for all $n\geq0$ and all $k\geq 0$ such that $2k+1\leq n$, this in turn implies that $f(A,B,c_{\U,\b};\P+\e Y_2^2 +\e\psi)$ is orthogonal to $Y_2^1$. Indeed, any multiplication of spherical harmonics of the form $Y_{n_1}^{2m_1}Y_{n_2}^{2m_2}$, for $0\leq 2m_i\leq n_i$ cannot generate spherical harmonics of the form $Y_{n_3}^{2m_3+1}$, for any $0\leq 2m_3+1\leq n_3$. The other two orthogonality conditions which remain to be considered are
\begin{equation}\label{compt cond}
\langle f(A,B,c_{\U,\b};\P+\e Y_2^2 +\e\psi),Y_2^0\rangle=\langle f(A,B,c_{\U,\b};\P+\e Y_2^2 +\e\psi),Y_2^2\rangle=0.
\end{equation}
These equations are restrictions for the coefficients $A=A(\psi;\e)$ and $B=B(\psi;\e)$. Using the given expression for $f$, we find that
\begin{equation}\label{cond A B}
\begin{split}
0=\b A(\psi;\e)&+\frac{7\U^2+5\b^2}{7\sqrt{5\pi}}B(\psi;\e)+3\b\frac{11\U^2+5\b^2}{28\pi}c_{\U,\b} \\
&+\e\left\langle (2B(\psi;\e)\P+3c_{\U,\b}\P^2)\psi,Y_2^0\right\rangle + \langle R,Y_2^0\rangle, \\
0=A(\psi;\e) &-\left(\b\frac{2}{7}\sqrt{\frac{5}{\pi}}-2\langle \P\psi,Y_2^2\rangle\right)B(\psi;\e)+3\left(\frac{3\U^2+5\b^2}{28\pi}+\langle \P^2\psi, Y_2^2\rangle\right)c_{\U,\b} \\
&+ \frac{1}{\e}\langle R, Y_2^2 \rangle.
\end{split}
\end{equation}
From here we can easily obtain
\begin{equation}\label{eq B beta}
\begin{split}
0&=\left(\frac{7\U^2+15\b^2}{35}\sqrt{\frac{5}{\pi}}-2\b\langle \P\psi, Y_2^2\rangle\right)B(\psi;\e)+\b\left(\frac{6\U^2}{7\pi}-3\langle \P^2\psi,Y_2^2\rangle\right) c_{\U,\b} \\
&+\e\left\langle (2B(\psi;\e)\P+3c_{\U,\b}\P^2)\psi,Y_2^0\right\rangle+ \langle R,Y_2^0\rangle-\b\frac{1}{\e}\langle R,Y_2^2\rangle.
\end{split}
\end{equation}
Together with the above assumptions on the orthogonality of $\psi$ with certain families of spherical harmonics and in order to be able to solve for $B(\psi;\e)$ in \eqref{eq B beta}, we define the function space $X$ we will work in by
\begin{equation}\label{def X}
\begin{split}
X&:=\left\lbrace \psi\in H^2: \psi\perp Y_2^m, Y_n^{-|k|}, Y_n^{2k-1},\quad |m|\leq 2,\, n>0,\, k>0, \right. \\
&\qquad\left.|\langle\psi,\P Y_2^2\rangle|\leq\frac{1}{8}\frac{\U^2+\b^2}{\sqrt{5\pi}},\quad |\langle\psi,\P^2Y_2^2\rangle|\leq\frac{\U^2+\b^2}{3},\quad \V \psi\V_{H^2}\leq 150(1+\U^2+\b^2)^2 \right\rbrace. 
\end{split}
\end{equation}
The following lemma determines $A(\psi;\e)$ and $B(\psi;\e)$ and state some of their properties. 
\begin{lemma}\label{lemma def A B}
There exists $\e_1>0$ such that for $\psi\in X$ and for $0\leq \e\leq \e_1$ the relations \eqref{cond A B} recursively define real sequences $(a_j(\psi))_{j\geq0}$ and $(b_j(\psi))_{j\geq0}$ both depending on $c_{\gamma,\beta}$ and such that
\begin{equation}\label{def A B}
A(\psi;\e):=\sum_{j\geq0}a_j(\psi)\e^j, \qquad B(\psi;\e):=\sum_{j\geq0}b_j(\psi)\e^j
\end{equation}
are well-defined, uniformly bounded for $\psi\in X$ and satisfy \eqref{cond A B}. Moreover, the maps
\begin{equation*}
\psi\mapsto a_j(\psi),\qquad \psi\mapsto b_j(\psi),\quad j\geq0,
\end{equation*}
are Lipschitz continuous on $L^2(\Sph)$ with constants $L_j\leq L^j$, for some $L>0$, and the maps
\begin{equation*}
\psi\mapsto a_0(\psi), \qquad \psi\mapsto b_j(\psi)
\end{equation*}
are Lipschitz continuous on $\dot{H}^2(\Sph)$, with 
\begin{equation*}
\begin{split}
|a_0(\psi_1)-a_0(\psi_2)| &< \frac{1}{2}(1+|\b|)(\U^2+\b^2)c_{\U,\b}\V \psi_1-\psi_2\V_{\dot{H}^2},\\
|b_0(\psi_1)-b_0(\psi_2)|&<\frac{2}{25}|\b|c_{\U,\b}\V \psi_1-\psi_2\V_{\dot{H}^2}.
\end{split}
\end{equation*}
\end{lemma}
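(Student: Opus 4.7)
My plan is to view the two conditions in \eqref{cond A B} as an implicit definition of $A$ and $B$ as functions of $\psi$ and $\e$, and to construct the power series \eqref{def A B} by matching coefficients of equal powers of $\e$. Substituting the ans\"atze into \eqref{cond A B} gives, at each order $j\geq 0$, a $2\times 2$ linear system for $a_j(\psi)$ and $b_j(\psi)$ whose right-hand side depends only on $a_0,\dots,a_{j-1}$ and $b_0,\dots,b_{j-1}$, so the sequences are uniquely determined by recursion provided the determinant of this linear system does not vanish on $X$.

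At leading order, since the remainder $R$ in \eqref{eq f} is $O(\e^2)$, equation \eqref{eq B beta} at $\e=0$ decouples and yields the explicit formula
\begin{equation*}
b_0(\psi) = -\frac{\b\bigl(\tfrac{6\U^2}{7\pi} - 3\langle \P^2\psi, Y_2^2\rangle\bigr)c_{\U,\b}}{\tfrac{7\U^2+15\b^2}{35}\sqrt{\tfrac{5}{\pi}} - 2\b\langle \P\psi, Y_2^2\rangle}.
\end{equation*}
The constraint $|\langle\psi,\P Y_2^2\rangle|\leq\tfrac{1}{8}\tfrac{\U^2+\b^2}{\sqrt{5\pi}}$ in the definition \eqref{def X} of $X$ keeps the denominator bounded below by a positive constant depending only on $\U,\b$, while $|\langle\psi,\P^2 Y_2^2\rangle|\leq \tfrac{\U^2+\b^2}{3}$ controls the numerator. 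Substituting back into the second identity of \eqref{cond A B} then gives $a_0(\psi)$. Both $a_0$ and $b_0$ are rational functions of the two scalar pairings $\langle\P\psi,Y_2^2\rangle$ and $\langle\P^2\psi,Y_2^2\rangle$, which is precisely the source of the sharper $\dot H^2$-Lipschitz bounds stated in the lemma.

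For the higher orders, I would read off from \eqref{eq f} that $R=\e^2 R_2(B,\psi)+\e^3 R_3(\psi)$ with $R_2$ affine in $B$ and $R_3$ independent of $B$. Substituting the series for $B$ and matching the coefficient of $\e^j$ in the two equations \eqref{cond A B} produces a linear system
\begin{equation*}
\b\, a_j + \tfrac{7\U^2+5\b^2}{7\sqrt{5\pi}}\, b_j = P_j,\qquad a_j - \bigl(\b\tfrac{2}{7}\sqrt{\tfrac{5}{\pi}} - 2\langle\P\psi,Y_2^2\rangle\bigr)\, b_j = Q_j,
\end{equation*}
where $P_j, Q_j$ are polynomials in $a_0,\dots,a_{j-1}, b_0,\dots,b_{j-1}$ with coefficients given by $L^2$-pairings of $\psi$ against fixed polynomials in $\P, Y_2^0, Y_2^2$. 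Its determinant coincides, up to a harmless factor, with the denominator appearing in the formula for $b_0$, so the system is uniquely solvable on $X$. An induction on $j$ using the $H^2$-bound $\V\psi\V_{H^2}\leq 150(1+\U^2+\b^2)^2$ and the algebra property of $H^2$ yields uniform estimates $|a_j(\psi)|+|b_j(\psi)|\leq C_0 L_0^{\,j}$, so the series \eqref{def A B} converges for $|\e|\leq\e_1$ with $\e_1 L_0<1$; the same induction applied to differences produces $L^2$-Lipschitz constants $L_j\leq L^{\,j}$. The quantitative $\dot H^2$-Lipschitz estimates for $a_0,b_0$ are then obtained by differentiating the explicit leading-order formulas with respect to the two admissible scalar pairings and inserting the explicit lower bound for the denominator.

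The main obstacle I anticipate is the factor $\e^{-1}\langle R,Y_2^2\rangle$ appearing on the right-hand side of the second line of \eqref{cond A B}. Although $R$ formally starts at order $\e^2$, the $\e^{-1}$ propagates nonlinear contributions into every order $b_j$ with $j\geq 1$; closing the induction requires carefully tracking how each $b_k$ with $k<j$ enters the right-hand side at order $\e^j$ through the cubic term $c_{\U,\b}(\psi+Y_2^2)^3$ and the quadratic term $B(\psi+Y_2^2)^2$, and ensuring that the geometric growth $L_0^{\,j}$ is preserved rather than amplified. The bookkeeping has to be organised so that at each step only finitely many admissible inner products of $\psi$ with fixed spherical polynomials appear, with coefficients bounded uniformly for $\psi\in X$.
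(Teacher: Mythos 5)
Your proposal is correct and follows essentially the same route as the paper: viewing the two compatibility conditions as a $2\times 2$ linear system for $(a_j,b_j)$ at each order is algebraically equivalent to the paper's elimination of $A$ to obtain the scalar equation for $B$ (indeed, your system's determinant is exactly $-d(\psi)$, the denominator in the paper's explicit formula for $b_0$), and you correctly identify the explicit leading-order formulas, the geometric induction bound, the role of the constraints in $X$ in keeping the denominator away from zero, and the main technical concern about the $\e^{-1}\langle R,Y_2^2\rangle$ term.
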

\begin{remark}
The Lipschitz continuity of the coefficients $a_0(\psi)$ and $b_0(\psi)$ for $\psi\in X\subset H^2(\Sph)$ is key for proving the contraction argument for the subspace $X$ that will guarantee the existence and uniqueness of solution.
\end{remark}
\begin{proof}
From \eqref{eq B beta} one can write,
\begin{equation}\label{eq B}
\begin{split}
\left(\frac{7\U^2+15\b^2}{35}\sqrt{\frac{5}{\pi}}-2\b\langle \P\psi, Y_2^2\rangle\right)B(\psi;\e)&=-\b\left(\frac{6\U^2}{7\pi}-3\langle \P^2\psi,Y_2^2\rangle\right) c_{\U,\b} \\
&-\e\left\langle (2B(\psi;\e)\P+3c_{\U,\b}\P^2)\psi,Y_2^0\right\rangle \\
&- \langle R,Y_2^0\rangle+\frac{\b}{\e}\langle R,Y_2^2\rangle.
\end{split}
\end{equation}
Expanding here $B(\psi,\e)$ in series as $B(\psi;\e):=\sum_{j\geq0}b_j(\psi)\e^j$ and comparing coefficients in $\e$ one shows that $b_j(\psi)$ can be inductively defined from a linear combination of $b_{j-2}(\psi)$ and $b_{j-1}(\psi)$. The coefficients of this linear combination are $L^2(\Sph)$ inner products of $\psi$ against  spherical harmonics. These inner products are uniformly bounded because $\psi\in X$, see \eqref{def X}. Therefore, one can show by induction that there exists $M$ sufficiently large such that $|b_j(\psi)|\leq M^j$ and the series expansion for $B(\psi,\e)$ converges for $0\leq \e< M^{-1}$. The same holds for $A(\psi,\e):=\sum_{j\geq0}a_j(\psi)\e^j$ because we can use 
\begin{equation*}
A(\psi;\e) =\left(\b\frac{2}{7}\sqrt{\frac{5}{\pi}}-2\langle \P\psi,Y_2^2\rangle\right)B(\psi;\e)-3\left(\frac{3\U^2+5\b^2}{28\pi}+\langle \P^2\psi, Y_2^2\rangle\right)c_{\U,\b} - \frac{1}{\e}\langle R, Y_2^2 \rangle.
\end{equation*}
to find $a_j(\psi),\,j\geq0$, directly from $B(\psi,\e)$, with
\begin{equation}\label{def a0}
a_0(\psi)=\left(\b\frac{2}{7}\sqrt{\frac{5}{\pi}}-2\langle \P\psi,Y_2^2\rangle\right)b_0(\psi)-3\left(\frac{3\U^2+5\b^2}{28\pi}+\langle \P^2\psi, Y_2^2\rangle\right)c_{\U,\b}. 
\end{equation}
The maps $\psi\mapsto a_j(\psi)$ and $\psi\mapsto b_j(\psi)$ for $j\geq0$ are Lipschitz. For $j\geq1$ it follows from the recursive construction of the coefficients, while for $j=0$ we observe that
\begin{equation}\label{def b0}
b_0(\psi)=B(\psi;0)=-\b c_{\U,\b}\frac{\left(\frac{6\U^2}{7\pi}-3\langle \P^2\psi,Y_2^2\rangle\right)}{\frac{7\U^2+15\b^2}{35}\sqrt{\frac{5}{\pi}}-2\b\langle \P\psi, Y_2^2\rangle}=:-\b c_{\U,\b}\frac{n(\psi)}{d(\psi)},
\end{equation}
which is well-defined because $\psi\in X$ implies $d(\psi)\neq 0$, see \eqref{def X}. We compute
\begin{equation*}
\P Y_2^2=(\b Y_2^0+\U Y_1^0)Y_2^2=\b\frac{1}{14}\sqrt{\frac{15}{\pi}}Y_4^2+\frac{\U}{2}\sqrt{\frac{3}{7\pi}}Y_3^2-\b\frac{1}{7}\sqrt{\frac{5}{\pi}}Y_2^2
\end{equation*}
and
\begin{equation*}
\begin{split}
\P^2Y_2^2&=(\b Y_2^0+\U Y_1^0)^2Y_2^2 \\
&=\b^2\frac{15}{11\pi\sqrt{182}}Y_6^2+\b\frac{\U}{2\pi}\sqrt{\frac{15}{77}}Y_5^2+\frac{\sqrt{3}}{154\pi}(11\U^2-5\b^2)Y_4^2+\frac{3\U^2+5\b^2}{28\pi}Y_2^2.
\end{split}
\end{equation*}
Recalling that $\langle Y_2^m,\psi \rangle =0$ and $\V Y_n^m \V_{\dot{H}^{-2}(\Sph)}=\frac{1}{n(n+1)}$, we have
\begin{equation*}
\begin{split}
|n(\psi_1)-n(\psi_2)| &\leq 3 \left( \b^2\frac{15}{11\pi\sqrt{182}}\frac{1}{42}+|\b|\frac{|\U|}{2\pi}\sqrt{\frac{15}{77}}\frac{1}{30}+\frac{\sqrt{3}}{154\pi}(11\U^2+5\b^2)\frac{1}{20}\right) \V\psi_1-\psi_2\V_{\dot{H}^2} \\
&<\frac{3}{250}(\b^2+\U^2)\V\psi_1-\psi_2\V_{\dot{H}^2},
\end{split}
\end{equation*}
and
\begin{equation*}
\begin{split}
|d(\psi_1)-d(\psi_2)| &\leq 2|\b|\left(|\b|\frac{1}{14}\sqrt{\frac{15}{\pi}}\frac{1}{20}+\frac{|\U|}{2}\sqrt{\frac{3}{7\pi}}\frac{1}{12}\right) \V\psi_1-\psi_2\V_{\dot{H}^2} \\
&< \frac{6}{125}(\b^2+\U^2)\V\psi_1-\psi_2\V_{\dot{H}^2}.
\end{split}
\end{equation*}
Then, 
\begin{equation*}
\begin{split}
|b_0(\psi_1)-b_0(\psi_2)|&= |\b|c_{\U,\b}\left| \frac{n(\psi_1)d(\psi_2)-n(\psi_2)d(\psi_1)}{d(\psi_1)d(\psi_2)} \right| \\
&\leq |\b|c_{\U,\b}\left| \frac{ n(\psi_1)-n(\psi_2)}{d(\psi_1)}\right| + |\b|c_{\U,\b}\left| \frac{n(\psi_2)\left(d(\psi_2)-d(\psi_1)\right)}{d(\psi_1)d(\psi_2)}\right|,
\end{split}
\end{equation*}
and we deduce that the $\dot{H}^2$ (and also the $H^2$) Lipschitz constant of $b_0$ is bounded by
\begin{equation*}
\begin{split}
|\b|&c_{\U,\b}\frac{\left(\frac{7\U^2+15\b^2}{35}\sqrt{\frac{5}{\pi}} +\frac{1}{4}\frac{\U^2+\b^2}{\sqrt{5\pi}}\right)\frac{3}{250}(\b^2+\U^2)+\left(\frac{6\U^2}{7\pi}+\U^2+\b^2\right)\frac{6}{125}(\b^2+\U^2)}{\left(\frac{7\U^2+15\b^2}{35}\sqrt{\frac{5}{\pi}}-\frac{1}{4}\frac{\U^2+\b^2}{\sqrt{5\pi}}\right)^2} < \frac{2}{25}|\b|c_{\U,\b}.
\end{split}
\end{equation*}
For the Lipschitz constant of $a_0$ we have 
\begin{equation*}
\begin{split}
|a_0(\psi_1)-a_0(\psi_2)|&\leq \left( \frac{2}{7}\sqrt{\frac{5}{\pi}}|\b|+2|\langle \P\psi_1,Y_2^2\rangle|\right)|b_0(\psi_1)-b_0(\psi_2)| \\
&+2|\langle \P(\psi_1-\psi_2),Y_2^2\rangle||b_0(\psi_2)| + 3c_{\U,\b}|\langle \P^2(\psi_1-\psi_2),Y_2^2\rangle|,
\end{split}
\end{equation*}
thanks to \eqref{def a0}. From \eqref{def b0} we can bound
\begin{equation*}
|b_0(\psi)| \leq \frac{\frac{6\U^2}{7\pi}+\U^2+\b^2}{\frac{7\U^2+15\b^2}{35}\sqrt{\frac{5}{\pi}}-\frac{1}{4}\frac{\U^2+\b^2}{\sqrt{5\pi}}}|\b|c_{\U,\b} < 8|\b|c_{\U,\b},
\end{equation*}
and we also have 
\begin{equation*}
\begin{split}
2|\langle \P(\psi_1-\psi_2),Y_2^2\rangle|&\leq \frac{4}{125}(|\b|+|\U|)\V\psi_1-\psi_2\V_{\dot{H}^2}, \\
3|\langle \P^2(\psi_1-\psi_2),Y_2^2\rangle|&\leq \frac{3}{250}(\b^2+\U^2)\V\psi_1-\psi_2\V_{\dot{H}^2}.
\end{split}
\end{equation*}
Combining these estimates we obtain
\begin{equation*}
|a_0(\psi_1)-a_0(\psi_2)| < \frac{1}{2}(1+|\b|)(\U^2+\b^2)c_{\U,\b}\V \psi_1-\psi_2\V_{\dot{H}^2},
\end{equation*}
the proof is complete.
\end{proof}
\begin{lemma}\label{lemma prop A B}
Let $\psi,\psi_j\in X$, $j\in {1,2}$ and let $A(\psi;\e)$ and $B(\psi;\e)$ as in Lemma \ref{lemma def A B}. Then, for $\e>0$ sufficiently small we have that
\begin{equation}\label{bounds A B}
|A(\psi,\e)|\leq 5(1+|\b|)(\U^2+\b^2)c_{\U,\b},\quad |B(\psi,\e)|\leq 8(1+|\b|)c_{\U,\b},
\end{equation}
\begin{equation}\label{Lip bounds A B}
\begin{split}
|A(\psi_1;\e)-A(\psi_2;\e)| &< \frac{3}{4}(1+|\b|)(\U^2+\b^2)c_{\U,\b}\V \psi_1-\psi_2\V_{\dot{H}^2},\\
|B(\psi_1;\e)-B(\psi_2;\e)|&<\frac{1}{5}|\b|c_{\U,\b}\V \psi_1-\psi_2\V_{\dot{H}^2},
\end{split}
\end{equation}
and
\begin{equation}\label{bound R}
\V R(B(\psi;\e),c_{\U,\b},\psi,\e)\V_{L^2} \lesssim \e^2,
\end{equation}
\begin{equation}\label{Lip bound R}
\V R(B(\psi_1;\e),c_{\U,\b},\psi_1,\e)-R(B(\psi_2;\e),\psi_2,\e)\V_{L^2} \lesssim \e^2\V \psi_1-\psi_2\V_{L^2}.
\end{equation}
\end{lemma}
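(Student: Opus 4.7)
The plan is to exploit the series representations $A(\psi;\e) = \sum_{j\geq 0} a_j(\psi)\e^j$ and $B(\psi;\e) = \sum_{j\geq 0} b_j(\psi)\e^j$ established in Lemma \ref{lemma def A B}, together with the inner-product and norm constraints that define the set $X$. In all four inequalities the $j=0$ contributions carry the dominant part of the bound, while the higher-order coefficients contribute an $O(\e)$ tail that can be absorbed by shrinking $\e_0$.

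For the uniform bounds \eqref{bounds A B}, I would first bound $|a_0(\psi)|$ and $|b_0(\psi)|$ directly from the explicit formulas \eqref{def a0} and \eqref{def b0}. For $b_0$, the denominator $d(\psi)$ is bounded below using $|\langle \P\psi, Y_2^2\rangle|\leq \tfrac{1}{8}\tfrac{\U^2+\b^2}{\sqrt{5\pi}}$ and the numerator above using $|\langle \P^2\psi, Y_2^2\rangle|\leq \tfrac{\U^2+\b^2}{3}$, yielding the intermediate estimate $|b_0(\psi)|\leq 8|\b|c_{\U,\b}$ already recorded inside the proof of Lemma \ref{lemma def A B}. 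Plugging this into \eqref{def a0} with the same bounds from \eqref{def X} gives a $\U,\b$-explicit estimate on $|a_0(\psi)|$. The tails $\sum_{j\geq 1}|a_j(\psi)|\e^j$ and $\sum_{j\geq 1}|b_j(\psi)|\e^j$ are controlled via $|a_j|,|b_j|\leq L^j$ from Lemma \ref{lemma def A B}, and for $\e_0<1/L$ they contribute $O(\e)$; choosing $\e_0$ small enough that this tail fits inside the slack between the zeroth-order bounds and the targets $5(1+|\b|)(\U^2+\b^2)c_{\U,\b}$ and $8(1+|\b|)c_{\U,\b}$ closes \eqref{bounds A B}.

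For the Lipschitz estimates \eqref{Lip bounds A B}, the $\dot{H}^2$ Lipschitz constants of $a_0$ and $b_0$ are already computed in Lemma \ref{lemma def A B} as $\tfrac{1}{2}(1+|\b|)(\U^2+\b^2)c_{\U,\b}$ and $\tfrac{2}{25}|\b|c_{\U,\b}$, respectively. Each $a_j,b_j$ for $j\geq 1$ is $L^2$-Lipschitz with constant at most $L^j$, hence also $\dot{H}^2$-Lipschitz with that same constant. Summing the geometric tail in $\e$ produces an $O(\e)$ correction that, after further shrinking $\e_0$, fits inside the gap to the target constants $\tfrac{3}{4}(1+|\b|)(\U^2+\b^2)c_{\U,\b}$ and $\tfrac{1}{5}|\b|c_{\U,\b}$. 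For the remainder estimates, recall
\begin{equation*}
R(B,c_{\U,\b},\psi,\e)=\e^2(\psi+Y_2^2)^2(B+3c_{\U,\b}\P)+\e^3 c_{\U,\b}(\psi+Y_2^2)^3.
\end{equation*}
Since $\psi\in X$ gives $\V\psi\V_{H^2}\leq 150(1+\U^2+\b^2)^2$ and $\P, Y_2^2$ are smooth, applying the algebra inequality \eqref{algebra ineq} with $k=2$ to each product, together with the just-proved bound $|B(\psi;\e)|\leq 8(1+|\b|)c_{\U,\b}$, yields $\V R\V_{L^2}\leq \V R\V_{H^2}\lesssim \e^2$, which is \eqref{bound R}. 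For \eqref{Lip bound R}, I would telescope the difference $R(B(\psi_1;\e),c_{\U,\b},\psi_1,\e)-R(B(\psi_2;\e),c_{\U,\b},\psi_2,\e)$ into terms each factoring out either $\psi_1-\psi_2$ (multiplied by a uniformly $H^2$-bounded quantity and by $\e^2$ or $\e^3$) or $B(\psi_1;\e)-B(\psi_2;\e)$ (already Lipschitz by \eqref{Lip bounds A B}), and then apply \eqref{algebra ineq} and the continuous embedding $H^2\hookrightarrow L^2$ to each term.

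The main obstacle is bookkeeping the tight numerical constants in \eqref{bounds A B} and \eqref{Lip bounds A B}: the targets leave only a small buffer above the zeroth-order contributions of $a_0$ and $b_0$, so the choice of $\e_0$ must be made quantitatively explicit in terms of $L$, $\U$ and $\b$ to guarantee that the geometric tails $\sum_{j\geq 1} L^j\e^j$ strictly fit inside that buffer. Once this is done, the remainder estimates are routine algebra-inequality manipulations on polynomials of $\psi+Y_2^2$ with bounded coefficients.
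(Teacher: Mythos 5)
Your proposal follows essentially the same route as the paper: bound the zeroth-order coefficients $a_0,b_0$ explicitly using the constraints defining $X$, absorb the geometric tails $\sum_{j\geq1}|a_j|\e^j$ and $\sum_{j\geq1}|b_j|\e^j$ by shrinking $\e_0$, pass the Lipschitz estimates from $a_0,b_0$ to $A,B$ the same way, and handle $R$ by the algebra inequality on $(\psi+Y_2^2)^2$ and $(\psi+Y_2^2)^3$. The only small slip is attribution: the statement of Lemma \ref{lemma def A B} gives $L^j$ as \emph{Lipschitz} constants of $a_j,b_j$, not as magnitude bounds; the geometric magnitude bound $|a_j(\psi)|,|b_j(\psi)|\leq M^j$ (for a possibly different $M$) is established inductively inside the \emph{proof} of that lemma, and that is what controls the tails in \eqref{bounds A B}.
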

\begin{proof}
We begin by showing \eqref{bounds A B} for $B(\psi;\e)$. We already know $|b_0(\psi)|<8|\b|c_{\U,\b}$ and from the proof of Lemma \ref{lemma def A B} we can find $M$ large enough such that $|b_j(\psi)|\leq M^j$, so that for $\e>0$ small enough one has $\sum_{j\geq 1}(M\e)^j=\frac{M\e}{1-M\e}<8c_{\U,\b}$ and the results swiftly follows.
Similarly, from \eqref{def a0} and the bound on $|b_0(\psi)|$ we deduce that
\begin{equation*}
\begin{split} 
|a_0(\psi)| &\leq \left(\frac{2}{7}\sqrt{\frac{5}{\pi}}|\b|+\frac{1}{4}\frac{\U^2+\b^2}{\sqrt{5\pi}}\right)8|\b|c_{\U,\b} +\left(3\frac{3\U^2+5\b^2}{28\pi}+\U^2+\b^2\right)c_{\U,\b} \\
&<\frac{9}{2}(1+|\b|)(\U^2+\b^2)c_{\U,\b}
\end{split}
\end{equation*}
and we control $\sum_{j\geq1}|a_j(\psi)|\e^j\leq \frac{1}{2}(1+|\b|)(\U^2+\b^2)c_{\U,\b}$ for $\e>0$ small enough as we did above.
The bounds \eqref{Lip bounds A B} follow from the definition of both $A(\psi;\e)$ and $B(\psi;\e)$ in \eqref{def A B} and the Lipschitz constants of $a_0(\psi)$ and $b_0(\psi)$ from Lemma \ref{lemma def A B}. Finally, the bounds \eqref{bound R} and \eqref{Lip bound R} are easily deduced from the definition of $R(B(\psi;\e),\psi,\e;\cdot,\cdot)$.
\end{proof}
\subsubsection{Contraction Mapping}\label{Contraction}
The solutions to \eqref{eq f} are constructed as fixed points of a map on $X$. Therefore, we define
\begin{equation*}
K_\e:X\rightarrow H^2,\quad \psi\mapsto K_\e(\psi),
\end{equation*}
where $K_\e(\psi)$ solves $(\D+6)K_\e=f(A(\psi;\e),B(\psi;\e),c_{\U,\b};\P+\e Y_2^2+\e\psi)$ in the orthogonal complement of $\textbf{Y}_2$, where $A(\psi;\e)$ and $B(\psi;\e)$ are defined as above, guaranteeing that the necessary orthogonality conditions are satisfied. 
\begin{proposition}\label{Prop Contraction}
$K_\e$ defines a contraction on $(X,\V\cdot\V_{H^2})$, for $\e>0$ small enough.
\end{proposition}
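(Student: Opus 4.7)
The plan is to establish both that $K_\e$ maps $X$ into itself and that it is Lipschitz on $X$ with constant strictly less than $1$. The key underlying tool is that on the $L^2$-orthogonal complement $\textbf{Y}_2^\perp$ the operator $\D+6$ has eigenvalues $-(n(n+1)-6)$ bounded away from zero (the minimum modulus being $4$, attained at $n=1$), so that its inverse $L:=(\D+6)^{-1}\vert_{\textbf{Y}_2^\perp}$ is a bounded map from $L^2(\Sph)\cap\textbf{Y}_2^\perp$ into $H^2(\Sph)\cap\textbf{Y}_2^\perp$. Well-definedness of $K_\e$ then follows from the compatibility conditions \eqref{compt cond} built into the construction of $A(\psi;\e)$ and $B(\psi;\e)$ in Lemma \ref{lemma def A B}, together with the parity arguments already exploited in Section \ref{Setup}: the polynomial structure of $f$ and the parity in $\varphi$ of $\psi$, $\P$, $Y_2^2$ guarantee that the right-hand side is orthogonal to every $Y_n^{-|k|}$ and every $Y_n^{2k-1}$, and therefore so is $K_\e(\psi)$.

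For the self-map property $K_\e(X)\subset X$, the orthogonality constraints are handled as above, and it remains to verify the two pairing bounds on $|\langle K_\e(\psi),\P Y_2^2\rangle|$ and $|\langle K_\e(\psi),\P^2 Y_2^2\rangle|$ together with the $H^2$-ball bound. Using the elliptic estimate $\V K_\e(\psi)\V_{H^2}\lesssim \V f(A,B,c_{\U,\b};\P+\e Y_2^2+\e\psi)\V_{L^2}$ and expanding via \eqref{eq f}, the $\e$-independent contribution $A(\psi;0)\P+B(\psi;0)\P^2+c_{\U,\b}\P^3$ is of order $c_{\U,\b}$ by the bounds \eqref{bounds A B} of Lemma \ref{lemma prop A B}, while the remaining terms are controlled by \eqref{bound R} and the algebra inequality \eqref{algebra ineq}, uniformly on $X$. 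Choosing $c_{\U,\b}$ sufficiently small that the leading part fits inside the constraints of $X$ with room to spare, and then $\e>0$ sufficiently small that the $O(\e)$ perturbations cannot spoil the fit, delivers the self-mapping property.

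For the contraction I would write
\begin{equation*}
K_\e(\psi_1)-K_\e(\psi_2)=L\bigl[f(A_1,B_1,c_{\U,\b};\P+\e Y_2^2+\e\psi_1)-f(A_2,B_2,c_{\U,\b};\P+\e Y_2^2+\e\psi_2)\bigr],
\end{equation*}
decompose the bracketed difference into the natural pieces arising from $A_1-A_2$, $B_1-B_2$, the polynomial perturbation in $\e(\psi_1-\psi_2)$, and $R_1-R_2$, and estimate each using \eqref{Lip bounds A B} together with \eqref{Lip bound R} of Lemma \ref{lemma prop A B}. Since the Lipschitz constants of $A$ and $B$ are proportional to $c_{\U,\b}$ and the remainder $R$ is $O(\e^2)$, this produces an inequality of the shape $\V K_\e(\psi_1)-K_\e(\psi_2)\V_{H^2}\leq (C_1 c_{\U,\b}+C_2\e)\V\psi_1-\psi_2\V_{H^2}$ with $C_1,C_2$ depending only on $\U,\b$ and the norm of $L$. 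Fixing $c_{\U,\b}$ so small that $C_1 c_{\U,\b}<1/2$ and then $\e$ so small that $C_2\e<1/2$ closes the argument.

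The main obstacle is the multiple role of the constant $c_{\U,\b}$: it must be strictly positive in order for the non-triviality coefficients $\langle\Psi_\e,Y_6^2\rangle$ and $\langle\Psi_\e,Y_4^2\rangle$ in Theorem \ref{thm existence} to be nonzero, yet simultaneously small enough to keep $K_\e(X)\subset X$ (since $a_0(\psi)$, $b_0(\psi)$ and the cubic term in $f$ all scale with $c_{\U,\b}$) and to produce a Lipschitz constant strictly less than $1$. The delicate part of the proof is therefore the bookkeeping that balances these competing demands while tracking the dependencies on $\U,\b$ through the norm of $L$, the algebra inequality \eqref{algebra ineq}, and the quantitative bounds of Lemma \ref{lemma prop A B}; no genuinely new estimate beyond those already recorded should be needed.
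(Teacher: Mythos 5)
Your proposal matches the paper's argument: both reduce the self-mapping step to an $L^2$ bound on the nonlinearity fed through the bounded inverse $(\D+6)^{-1}|_{\textbf{Y}_2^\perp}$, and both obtain the contraction factor from the $c_{\U,\b}$-proportional Lipschitz constants of $A(\psi;\e)$, $B(\psi;\e)$ in Lemma~\ref{lemma prop A B} together with $O(\e)$ control of the remainder, finally fixing $c_{\U,\b}$ small (the paper takes $c_{\U,\b}=\tfrac{1}{2}(1+\U^2+\b^2)^{-2}$). The only place the paper is more precise than your sketch is in verifying the pairing constraints $|\langle K_\e(\psi),\P Y_2^2\rangle|$ and $|\langle K_\e(\psi),\P^2 Y_2^2\rangle|$: rather than appealing to smallness of $c_{\U,\b}$, it notes that the $\e$-independent part of $f(A,B,c_{\U,\b};\P+\e Y_2^2+\e\psi)$ is zonal, so these pairings are automatically $O(\e)$ — but your route via small $c_{\U,\b}$ also closes.
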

\begin{proof}
Let $0<\e<\e_1$ for which by Lemma \ref{lemma def A B} the coefficients $A(\psi;\e)$ and $B(\psi;\e)$ are well-defined. We first show that $K_\e$ maps $X$ into itself, see \eqref{def X} for the definition of $X$. By construction, since $\psi$ is orthogonal to $Y_n^{-|k|}$ and $Y_n^{2k-1}$ for all $n,k>0$ and $f$ is a cubic polynomial, it is clear that $K_\e(\psi)$ will be orthogonal to $Y_n^{-|k|}$ and $Y_n^{2k-1}$ for all $n,k>0$, too. Moreover, it is straightforward to see that 
\begin{equation*}
|\langle K_\e(\psi),\P Y_2^2\rangle|+|\langle K_\e(\psi),\P^2Y_2^2\rangle|\lesssim \e,
\end{equation*}
since there are no non-zonal spherical harmonics at order 0 in $\e$ in the expansion of the nonlinearity $f(A(\psi;\e),B(\psi;\e),c_{\U,\b};\P+\e Y_2^2+\e\psi)$ in orders of $\e$. Now, since $\psi\in X$ we have $\V \psi \V_{L^\infty}\leq \V \psi \V_{H^2}\leq 150(1+\U^2+\b^2)^2$. Together with the bounds \eqref{bounds A B} and $\V \Psi_*\V_{L^\infty}\leq (|\b|+|\U|)$, one can easily prove that 
\begin{equation*}
\begin{split}
|f(A(\psi;\e),&B(\psi;\e),c_{\U,\b};\P+\e Y_2^2+\e\psi)|\\
&\leq |A(\psi;\e)|\V \P\V_{L^\infty} + |B(\psi;\e)|\V \P\V_{L^\infty}^2
+ |c_{\U,\b}|\V \P\V_{L^\infty}^3+O(\e) \\
&\leq 5(1+|\b|)(\U^2+\b^2)(|\b|+|\U|)c_{\U,\b} + 8(1+|\b|)(|\b|+|\U|)^2c_{\U,\b}  \\
&\quad + (|\b|+|\U|)^3c_{\U,\b} + O(\e) \\
&\leq 17(1+\U^2+\b^2)^2 + O(\e),
\end{split}
\end{equation*}
which in turn gives
\begin{equation*}
\V f(A(\psi;\e),B(\psi;\e);\P+\e Y_2^2+\e\psi) \V_{L^2}\leq 65(1+\U^2+\b^2)^2,
\end{equation*}
for $\e>0$ small enough. For
\begin{equation*}
\widetilde{K_\e}:\psi \mapsto f(A(\psi;\e),B(\psi;\e),c_{\U,\b};\P+\e Y_2^2+\e\psi).
\end{equation*}
we have that $K_\e(\psi)=(\D+6)^{-1}\widetilde{K_\e}(\psi)$ and since $\langle \widetilde{K_\e}(\psi),Y_2^m\rangle=0$ for all $0\leq |m|\leq 2$, 
\begin{equation*}
\V K_\e(\psi)\V_{H^2}\leq \frac{13}{6}\V \widetilde{K_\e}(\psi)\V_{L^2}\leq 150(1+\U^2+\b^2)^2.
\end{equation*}
Thus, for $\e$ small enough we conclude that $K_\e(X)\subset X$. We finish the proof of the proposition by showing that $K_\e$ is indeed a contraction mapping. For this, let $\psi_1,\psi_2\in X$ and define $G_j=\P+\e Y_2^2 +\e\psi_j$ for $j=1,\,2$. Observe that
\begin{equation*}
\begin{split}
\widetilde{K_\e}(\psi_1)-\widetilde{K_\e}(\psi_2)&=f(A(\psi_1;\e),B(\psi_1;\e),c_{\U,\b};G_1)-f(A(\psi_2;\e),B(\psi_2;\e),c_{\U,\b};G_2)\\
&=(A(\psi_1;\e)-A(\psi_2;\e))\P+(B(\psi_1;\e)-B(\psi_2;\e))\P^2 +O(\e)\\
&=(a_0(\psi_1)-a_0(\psi_2))\P+(b_0(\psi_1)-b_0(\psi_2))\P^2 + O(\e)
\end{split}
\end{equation*}
Thus, up to terms of order $\e$ we use the Lipschitz constants for $a_0(\psi)$ and $b_0(\psi)$ in $\dot{H}^2(\Sph)$ previously found and we bound
\begin{equation*}
\begin{split}
\V (a_0(\psi_1)&-a_0(\psi_2))\P+(b_0(\psi_1)-b_0(\psi_2))\P^2 \V_{L^2}\\
& \leq \left( \frac{1}{2}(1+|\b|)(\U^2+\b^2)\V \P\V_{L^2} + \frac{2}{25}|\b|\V \P^2\V_{L^2}\right) c_{\U,\b}\V \psi_1-\psi_2\V_{\dot{H}^2}.
\end{split}
\end{equation*}
Note that $\V \P \V_{L^2}=|\b|+|\U|$ and $\V \P^2 \V_{L^2}\leq \frac{5}{4}(\U^2+\b^2)$. Then,
\begin{equation*}
\begin{split}
\V (a_0(\psi_1)&-a_0(\psi_2))\P+(b_0(\psi_1)-b_0(\psi_2))\P^2 \V_{L^2}\\
& \leq \left( \frac{1}{2}(1+|\b|)(\U^2+\b^2)(|\b|+|\U|) + \frac{2}{25}|\b|\frac{5}{4}(\U^2+\b^2)\right) c_{\U,\b}\V \psi_1-\psi_2\V_{\dot{H}^2} \\
&< (1+\U^2+\b^2)^2c_{\U,\b}\V \psi_1-\psi_2\V_{\dot{H}^2}.
\end{split}
\end{equation*}
Now, take $c_{\U,\b}:=\frac{1}{2}(1+\U^2+\b^2)^{-2}< \frac{1}{2}$ from which we deduce that
\begin{equation*}
\V K_\e(\psi_1)-K_\e(\psi_2)\V_{H^2}\leq \V \widetilde{K_\e}(\psi_1)-\widetilde{K_\e}(\psi_2)\V_{L^2}\leq \left( \frac{1}{2}+O(\e)\right)\V\psi_1-\psi_2\V_{H^2},
\end{equation*}
hence obtaining a contraction for $\e>0$ sufficiently small.
\end{proof}
\subsubsection{Non-triviality of the solution}\label{Non-triviality}
Given $\e>0$ small enough, let $\psi_\e\in X$ be the fixed point of $K_\e$, well-defined thanks to Proposition \ref{Prop Contraction}. We finish the proof of Theorem \ref{thm existence} showing that $\psi_\e$ is non-zonal and does not belong to $\textbf{Y}_2$. 
\begin{lemma}
Let $\e>0$ small enough so that, by Proposition \ref{Prop Contraction}, $\psi_\e$ is the fixed point of $K_\e$. Then,
\begin{equation*}
\begin{split}
\langle \psi_\e,Y_6^2\rangle&=-\b^2\frac{1}{36}\frac{45}{11\pi\sqrt{182}}c_{\U,\b}\e+O(\e^2), \\
\langle \psi_\e,Y_4^2\rangle &=-\frac{1}{14}\left( \frac{3\sqrt{3}}{154\pi}(11\U^2-5\b^2) -\b^2\frac{1}{7\pi}\frac{30\U^2}{7\U^2+15\b^2} \right)c_{\U,\b}\e + O(\e^2).
\end{split}
\end{equation*}
\end{lemma}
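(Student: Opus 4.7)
The plan is to start from the fixed-point identity $(\D+6)\psi_\e = \widetilde{K_\e}(\psi_\e)$, where $\widetilde{K_\e}(\psi_\e) := f(A(\psi_\e;\e), B(\psi_\e;\e), c_{\U,\b}; \P + \e Y_2^2 + \e\psi_\e)$. Using $(\D+6)Y_4^2 = -14 Y_4^2$ and $(\D+6)Y_6^2 = -36 Y_6^2$, pairing the fixed-point equation against each harmonic gives $-14 \langle \psi_\e, Y_4^2 \rangle = \langle \widetilde{K_\e}(\psi_\e), Y_4^2 \rangle$ and $-36 \langle \psi_\e, Y_6^2 \rangle = \langle \widetilde{K_\e}(\psi_\e), Y_6^2 \rangle$, so it suffices to extract the order-$\e$ part of the right-hand sides.

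The key observation is that at $\e = 0$ the nonlinearity reduces to $\widetilde{K_0}(\psi_0) = a_0(\psi_0)\P + b_0(\psi_0)\P^2 + c_{\U,\b}\P^3$, which is zonal because $\P$ is. Combining the Lipschitz estimates for $A, B$ from Lemma \ref{lemma prop A B} with the contraction constant $1/2 + O(\e)$ from Proposition \ref{Prop Contraction}, the fixed points $\psi_\e$ depend continuously on $\e$ in $H^2$, so $\psi_\e \to \psi_0$ as $\e \to 0$ with $\psi_0$ the fixed point of $K_0$; since the range of $K_0$ lies in the zonal subspace, $\psi_0$ is itself zonal. Taylor-expanding $f$ in $\e$, the order-$\e$ contribution to $\widetilde{K_\e}(\psi_\e)$ takes the form $(a_0(\psi_0) + 2 b_0(\psi_0)\P + 3 c_{\U,\b}\P^2)(Y_2^2 + \psi_0) + \alpha \P + \beta \P^2$ for some constants $\alpha, \beta$ coming from the derivatives of $A, B$ in $\e$ and in $\psi$. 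Since $\psi_0$, $\P$ and $\P^2$ are zonal, the only non-zonal contribution at leading order is the single term $(a_0(\psi_0) + 2 b_0(\psi_0)\P + 3 c_{\U,\b}\P^2) Y_2^2$.

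It then remains to read off the $Y_4^2, Y_6^2$ coefficients using the expansions of $\P Y_2^2$ and $\P^2 Y_2^2$ displayed in the proof of Lemma \ref{lemma def A B}. For $Y_6^2$ only $3 c_{\U,\b}\P^2 Y_2^2$ contributes, since $\langle Y_2^2, Y_6^2\rangle = 0$ and $\P Y_2^2$ has no $Y_6^2$ mode; one reads off $\langle \P^2 Y_2^2, Y_6^2\rangle = \b^2 \frac{15}{11\pi\sqrt{182}}$, and division by $-36$ gives the first formula. For $Y_4^2$ both $\P Y_2^2$ and $\P^2 Y_2^2$ contribute, and $b_0(\psi_0)$ is evaluated from \eqref{def b0} using that zonality of $\psi_0$ kills both $\langle \P\psi_0, Y_2^2\rangle$ and $\langle \P^2\psi_0, Y_2^2\rangle$, leaving $b_0(\psi_0) = -\frac{30\b\U^2 c_{\U,\b}}{\sqrt{5\pi}(7\U^2 + 15\b^2)}$. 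Substituting and dividing by $-14$ yields the second formula. The only slightly delicate step is the $H^2$-continuity $\psi_\e \to \psi_0$ as $\e \to 0$, but this follows at once from the uniform-in-$\e$ contraction estimate that produced $\psi_\e$ in the first place; everything else is bookkeeping with explicit spherical-harmonic inner products.
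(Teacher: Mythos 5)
Your proposal is correct and follows essentially the same route as the paper: both expand the fixed-point equation $(\Delta+6)\psi_\e=\widetilde{K_\e}(\psi_\e)$ in powers of $\e$, observe that the $\e=0$ part is zonal (since the range of $K_0$ is zonal), isolate $(a_0(\psi_0)+2b_0(\psi_0)\P+3c_{\U,\b}\P^2)Y_2^2$ as the only non-zonal order-$\e$ contribution, and read off the $Y_4^2,Y_6^2$ coefficients using the explicit expansions of $\P Y_2^2$ and $\P^2 Y_2^2$ together with the eigenvalue identities $(\Delta+6)Y_4^2=-14Y_4^2$, $(\Delta+6)Y_6^2=-36Y_6^2$. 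The paper simply writes out $f_0$ and $f_1$ fully in spherical harmonics rather than phrasing things via $\e$-continuity of the fixed point, but the two presentations are mathematically equivalent.
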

\begin{proof}
Recall that $\psi_\e$ solves
\begin{equation*}
\begin{split}
\D\psi_\e+6\psi_\e = &A(\psi_\e;\e)\P + B(\psi_\e;\e)\P^2 + c_{\U,\b}\P^3 \\
&+\e(\psi_\e+Y_2^2)(A(\psi_\e;\e)+2B(\psi_\e;\e)\P + 3c_{\U,\b}\P^2) \\
&+R(B(\psi_\e;\e),c_{\U,\b},\psi,\e).
\end{split}
\end{equation*}
We expand the right hand side of the above equation in orders of $\e$ and then in spherical harmonics, so that we can easily use
\begin{equation}\label{relation}
\langle (\D+6)\psi_\e,Y_n^m\rangle = (-n(n+1)+6)\langle \psi_\e,Y_n^m\rangle,
\end{equation}
for all $n\neq 2$. This yields
\begin{equation}\label{eq f eps}
\D\psi_\e+6\psi_\e = f_0+\e f_1 +O(\e^2)
\end{equation}
where
\begin{equation*}
\begin{split}
f_0&=\b^3\frac{45}{154\pi}\sqrt{\frac{5}{13}}c_{\U,\b} Y_6^0 +\b^2\U\frac{ 15\sqrt{33}}{154\pi}c_{\U,\b} Y_5^0 +\b\left( \frac{9(11\U^2+5\b^2)}{77\pi\sqrt{5}}c_{\U,\b} + \b\frac{3}{7\sqrt{\pi}}b_0\right)Y_4^0 \\
&+\U\left( \frac{3}{10\pi}\sqrt{\frac{3}{7}}(\U^2+5\b^2)c_{\U,\b} +3\b\sqrt{\frac{3}{35\pi}}b_0 \right)Y_3^0 +\left( \b a_0+\frac{7\U^2+5\b^2}{7\sqrt{5\pi}}b_0+3\b\frac{11\U^2+5\b^2}{28\pi}c_{\U,\b} \right)Y_2^0 \\
& + \U\left( \frac{3}{140\pi}(21\U^2+55\b^2)c_{\U,\b} +\b\frac{2}{\sqrt{5\pi}}b_0+a_0\right)Y_1^0 + \left(\b\frac{21\U^2+5\b^2}{14\pi\sqrt{5}}c_{\U,\b}+\frac{\U^2+\b^2}{2\sqrt{\pi}}b_0\right) Y_0^0 \\
\end{split}
\end{equation*}
and 
\begin{equation*}
\begin{split}
f_1&= \psi_\e|_{\e=0}(a_0+2b_0\P+3c_{\U,\b}\P^2) + a_1\P+b_1\P^2\\ 
&+ \b^2\frac{45}{11\pi\sqrt{182}}c_{\U,\b} Y_6^2 + \b\U\frac{3}{2\pi}\sqrt{\frac{15}{77}}c_{\U,\b} Y_5^2 + \left( \frac{3\sqrt{3}}{154\pi}(11\U^2-5\b^2)c_{\U,\b} +\b\frac{1}{7}\sqrt{\frac{15}{\pi}}b_0 \right) Y_4^2 \\
&+ \b\U\sqrt{\frac{3}{7\pi}}b_0Y_3^2 +\left(a_0-\b\frac{2}{7}\sqrt{\frac{5}{\pi}}b_0+3\frac{3\U^2+5\b^2}{28\pi}c_{\U,\b}\right)Y_2^2, \\
\end{split}
\end{equation*}
while the big $O(\e^2)$ notation is with respect to the $L^2(\Sph)$ norm.
We note that $f_0$ is automatically orthogonal to $Y_2^2$, while the orthogonality of $f_0$ with respect to $Y_2^0$ is satisfied if $a_0$ and $b_0$ are such that
\begin{equation*}
\b a_0+\frac{7\U^2+5\b^2}{7\sqrt{5\pi}}b_0+3\b\frac{11\U^2+5\b^2}{28\pi}c_{\U,\b}=0
\end{equation*}
so we can invert $(\D+6)$, find $\psi_\e|_{\e=0}=(\D+6)^{-1}f_0$ and observe that, in particular, it is zonal. This in turn implies that $f_1$ is orthogonal to $Y_2^2$ as long as
\begin{equation*}
a_0-\b\frac{2}{7}\sqrt{\frac{5}{\pi}}b_0+3\frac{3\U^2+5\b^2}{28\pi}c_{\U,\b}=0.
\end{equation*}
The system is thus solved for 
\begin{equation*}
\begin{split}
a_0&=-\frac{1}{28\pi}\left( 9\U^2+15\b^2 + \frac{240\b^2\U^2}{7\U^2+15\b^2} \right)c_{\U,\b}, \\
b_0&=-\b c_{\U,\b}\frac{6\U^2}{7\U^2+15\b^2}\sqrt{\frac{5}{\pi}}.
\end{split}
\end{equation*}
This gives precise values for $f_0$ above and thus for
\begin{equation*}
\langle \psi_\e|_{\e=0},Y_n^m\rangle = \frac{1}{6-n^2-n}\langle f_0, Y_n^m \rangle, \quad n\neq 2.
\end{equation*}
One may keep proceeding in this fashion and obtain equations for the coefficients $a_j$ and $b_j$ by requiring that $\langle f_j, Y_2^0\rangle =0$ and $\langle f_{j+1}, Y_2^2\rangle =0$, for all $j\geq 1$, formally constructing the coefficients $A(\psi;\e)$ and $B(\psi;\e)$ obtained in Lemma \ref{lemma def A B}.
Furthermore, testing the right hand side of \eqref{eq f eps} and using the relation \eqref{relation} we find that, among others, 
\begin{equation*}
\begin{split}
\langle \psi_\e,Y_6^2\rangle&=-\b^2\frac{1}{36}\frac{45}{11\pi\sqrt{182}}c_{\U,\b}\e+O(\e^2), \\
\langle \psi_\e,Y_4^2\rangle &=-\frac{1}{14}\left( \frac{3\sqrt{3}}{154\pi}(11\U^2-5\b^2) -\b^2\frac{1}{7\pi}\frac{30\U^2}{7\U^2+15\b^2} \right)c_{\U,\b}\e + O(\e^2).
\end{split}
\end{equation*}
We remark that $\psi_\e$ is non-zonal and non-trivial, for all $\b,\U\in\R$ such that $\b^2+\U^2>0$. Indeed, for any $\b\neq0$ we have that $\langle \psi_\e,Y_6^2\rangle$ never vanishes and for $\b=0$ it must be $\U\neq 0$, hence $\langle \psi_\e,Y_4^2\rangle$ is non-zero. 
\end{proof}
\begin{remark}
The procedure carried out for the proof of Theorem \ref{thm existence} may be adapted to prove similar results for $\Psi_n$ as follows. As before, the problem reduces to finding non-zonal solutions $\psi$ to $\D\psi + \lambda_n\psi= f(\Psi_n+\e\psi)$. Introducing some (possibly a combination of) $Y_n^m$ through the elliptic operator, we require the nonlinearity $f(\Psi_n+\e\psi+\e Y_n^m)$ to be orthogonal to the space of spherical harmonics $\textbf{Y}_n$, of eigenvalue $\lambda_n$. This may be achieved by considering $f$ to be a polynomial of high enough degree, whose coefficients are to be determined so that both all orthogonality conditions and the contraction mapping property are satisfied. The case $\Psi_*=\Psi_3$ should be the easiest to tackle, since for that stream-function one may introduce $Y_3^2$ through $\D+12$ and choose the nonlinearity $f$ and the orthogonal assumptions for $\psi$ to be the same ones as for the case $\Psi_*=\Psi_2$.
\end{remark}
\subsection{Analytic Regularity}\label{sec reg}
To show Theorem \ref{thm reg}, we will demonstrate that the solution $\psi_\e$ constructed in Theorem \ref{thm existence} is a real analytic function whose $\G_\lambda$ norm is uniformly bounded in $\e$ for some $\lambda>0$. This is the statement of Proposition \ref{prop gevrey reg sol} below. Since the stream function $\Psi_\e=\Psi_*+\e\psi_\e$, this will directly yield that
\begin{equation*}
\V \b Y_2^0+\U Y_1^0 -\Psi_\e\V_{\G_\lambda}\leq M\e.
\end{equation*}
Before Proposition \ref{prop gevrey reg sol}, we recall the operator $A=-\D+1$ and we first show a more generic result concerning analytic regularity and $\G_\lambda$ bounds for solutions to semilinear elliptic equations with analytic coefficients on $\Sph$.
\begin{lemma}\label{lemma analytic regularity}
For $0\leq n\leq N$ let $a_n\in C^\o(\Sph)$ be analytic functions such that 
\begin{equation}\label{gevr norm a}
\V a_n \V_{\G_\lambda}\leq C_1e^{C_2\lambda}, \quad 0\leq n\leq N,
\end{equation}
for some $C_1,C_2 >0$ and for all $\lambda\geq0$. Let $u\in H^2$ solve the semilinear partial differential equation
\begin{equation}\label{eq u}
Au=\sum_{n=0}^N a_nu^n.
\end{equation}
Then, $u\in C^\o(\Sph)$ and there exists $\lambda_*>0$ depending only on $\V u \V_{H^2},\,N,\,C_1,\,C_2$ such that
\begin{equation*}
\V u \V_{\G_\lambda}\leq 4\V u \V_{H^2},
\end{equation*}
for all $0\leq \lambda\leq \lambda_*$.
\end{lemma}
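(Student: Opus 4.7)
My plan combines classical elliptic regularity for qualitative real-analyticity with a spectral-algebra inequality closed by a continuity bootstrap in $\lambda$. First I would establish $u\in C^\omega(\Sph)$ by invoking Morrey's theorem on real-analytic regularity for solutions of semilinear elliptic equations with real-analytic coefficients on real-analytic compact manifolds. The operator $A=-\D+1$ is elliptic of order two with analytic coefficients on the analytic manifold $\Sph$, and by hypothesis the nonlinearity $(x,\xi)\mapsto\sum_{n=0}^N a_n(x)\xi^n$ is real-analytic in both variables. Since the given $u\in H^2(\Sph)$ is continuous by two-dimensional Sobolev embedding, Morrey's theorem promotes it to real-analytic, so that $\phi(\lambda):=\V u\V_{\G_\lambda}$ is finite on some non-degenerate interval $[0,\lambda_0]$ with $\lambda_0>0$.

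Next I would derive a self-consistent polynomial inequality for $\phi(\lambda)$. Writing $Au=f$ with $f:=\sum_{n=0}^N a_n u^n$ and expanding in spherical harmonics gives $\mu_n u_n^m=f_n^m$, hence by the spectral definition of the Gevrey norm and $\mu_n\geq 1$,
\[
\V u\V_{\G_\lambda}^2 \;=\; \sum_{n\geq 1}\sum_{|m|\leq n} e^{2\lambda\sqrt{\mu_n}}|f_n^m|^2 \;\leq\; \V f\V_{\G_\lambda}^2.
\]
Iterating the algebra estimate \eqref{algebra ineq} to get $\V u^n\V_{\G_\lambda}\leq C_*^{n-1}\V u\V_{\G_\lambda}^n$, and applying the hypothesis \eqref{gevr norm a} on the $a_n$, yields
\[
\phi(\lambda)\;\leq\; C_1 e^{C_2\lambda}\, P(\phi(\lambda)), \qquad P(y):=\sum_{n=0}^N C_*^{n}\, y^n,
\]
where $C_*$ denotes the algebra constant from \eqref{algebra ineq} at $k=2$.

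Finally I would close by a continuity bootstrap. Dominated convergence yields continuity of $\phi\colon[0,\lambda_0]\to[0,\infty)$, which is also non-decreasing with $\phi(0)\leq\V u\V_{H^2}=:Y<4Y$. Setting $\Lambda_*:=\sup\{\Lambda\in[0,\lambda_0]:\phi(\lambda)\leq 4Y\text{ for all }\lambda\in[0,\Lambda]\}$, continuity implies $\Lambda_*>0$. If $\Lambda_*<\lambda_0$, then $\phi(\Lambda_*)=4Y$ and the self-consistent inequality at $\Lambda_*$ yields $4Y\leq C_1 e^{C_2\Lambda_*}P(4Y)$, i.e.\ a quantitative positive lower bound
\[
\Lambda_*\;\geq\;\frac{1}{C_2}\log\frac{4Y}{C_1 P(4Y)}
\]
depending only on $Y,N,C_1,C_2,C_*$, provided the argument of the logarithm exceeds $1$. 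Otherwise one simply argues from continuity of $\phi$ at $0$ that $\phi(\lambda)\leq 4Y$ holds on a positive neighbourhood. Taking $\lambda_*$ to be the minimum of this lower bound and $\lambda_0/2$ then completes the proof.

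The main obstacle is the qualitative analyticity step: the bootstrap and the polynomial inequality are essentially mechanical once $\phi$ is known finite on a positive interval, but a priori analyticity of an $H^2$ solution to the semilinear elliptic equation requires either invoking Morrey's classical but nontrivial theorem, or a direct bootstrap of higher Sobolev norms $\V u\V_{H^{2k}}$ using the algebra property of $H^{2k}$ together with careful factorial tracking of the constants to obtain an explicit analytic radius.
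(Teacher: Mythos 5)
Your qualitative analyticity step is fine and essentially parallels the paper's (which first bootstraps $u\in H^k$ for all $k$ via the algebra property, then invokes classical analytic elliptic regularity in local coordinates with a partition of unity; citing Morrey directly is an acceptable shortcut). The quantitative part, however, has a genuine gap. The estimate you derive, namely $\phi(\lambda)\leq C_1 e^{C_2\lambda}P(\phi(\lambda))$ with $P$ of degree $N$, is a purely algebraic self-consistency relation. Because $P$ has degree $N\geq 1$, the set $\lbrace y\geq 0: y\leq C_1 e^{C_2\lambda}P(y)\rbrace$ always contains a neighbourhood of $0$ and (for $N\geq 2$) all sufficiently large $y$; the inequality therefore places no a priori upper bound on $\phi(\lambda)$. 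Your continuity bootstrap is indispensable to rule out the "large" branch, but it only delivers a quantitative $\Lambda_*$ when the logarithm $\frac{1}{C_2}\log\frac{4Y}{C_1P(4Y)}$ is positive, i.e.\ when $4Y>C_1P(4Y)$. Since $P(0)=1$, this fails whenever $Y<C_1/4$ (and more generally when $C_1$ is large), a situation that certainly occurs in the paper's application. In that regime your fallback — "argue from continuity of $\phi$ at $0$" — gives a positive but \emph{uncontrolled} $\Lambda_*$, depending on the modulus of continuity of $\lambda\mapsto\V u\V_{\G_\lambda}$, i.e.\ on the analyticity radius of $u$ itself. When the lemma is applied to $u=\psi_\e$ this radius is a priori $\e$-dependent, so the resulting $\lambda_*$ is not "depending only on $\V u\V_{H^2},N,C_1,C_2$" as the statement requires; this is precisely the point of the lemma and it is lost.

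The paper's proof closes this by upgrading the algebraic relation to a differential inequality in $\lambda$: writing $\frac{1}{2}\frac{d}{d\lambda}\V u\V_{\G_\lambda}^2=\langle A^{1/2}Ae^{\lambda A^{1/2}}u,\,Ae^{\lambda A^{1/2}}u\rangle$ and using $\mu_n\geq 1$ to pass $A^{1/2}\leq A$ through the spectral calculus gives $\frac{d}{d\lambda}\V u\V_{\G_\lambda}\leq\V Au\V_{\G_\lambda}=\V\sum_n a_n u^n\V_{\G_\lambda}$. Feeding in the algebra bound and monotonising then produces a Riccati-type ODE for $1+\V u\V_{\G_\lambda}^N$, whose explicit solution blows up only at a $\lambda_*$ determined solely by $\V u\V_{H^2}$, $N$, $C_1$, $C_2$ and the algebra constant, and stays below $4\V u\V_{H^2}$ on a sub-interval of the same quantitative size. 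That the bound is on the \emph{rate of growth} of $\phi$, not merely a pointwise self-consistency, is what makes the argument uniform in $u$. If you wish to salvage your approach, you must supplement the self-consistent inequality with a uniform Lipschitz (or differential) bound on $\phi$ near $\lambda=0$; but this is exactly the extra information the paper's $\frac{d}{d\lambda}$-trick supplies.
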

\begin{proof}
We begin with the analyticity of $u$, for which we first prove that $u\in C^\infty$, it is smooth. Let $k\geq 2$ and let $C_k$ be the algebra constant of $H^k$. Then,
\begin{equation*}
\V u\V_{H^{k+2}}=\V Au \V_{H^k} \leq \sum_{n=0}^N \V a_n u^n\V_{H^k}\leq N\left(\max_{n=0,...,N}\V a_n\V_{H^k}\right)\left(1+(C_k\V u\V_{H^k})^N\right).
\end{equation*}
Since $u\in H^2$, the usual bootstrap argument derived from the above inequality yields that $u\in H^k$, for all $k\geq 0$, from which we deduce that $u\in C^\infty$, it is smooth.
 
Now, for any point $p\in\Sph$ distinct from the north and south poles, we find that $u$ is smooth and solves
\begin{equation*}
-\left(\partial_{\theta\theta}+\frac{\cos\theta}{\sin\theta}\partial_\theta +\frac{1}{\sin^2\theta}\partial_{\varphi\varphi}\right)u(\theta,\varphi)+u(\theta,\varphi)=\sum_{n=0}^N a_n(\theta,\varphi) u^n(\theta,\varphi).
\end{equation*}
in local coordinates $(\theta,\phi)$ in a small enough neighbourhood of $p\in \Sph$ such that it does not intersect the north and south poles. Together with \eqref{gevr norm a}, the above equation shows that $u$ is a smooth solution to a semilinear elliptic partial differential equation with analytic coefficients in an open neighbourhood of $\mathbb{R}^2$, for which classic results yield that the solution is in fact real analytic in that neighbourhood (see \cite{Blatt, Hashimoto} for modern proofs). For the poles, we use the coordinate system \eqref{equator chart} to remove the artificial singularities appearing in the expression of the Laplacian of $u$ in local coordinates. Since $\Sph$ is compact, a classical partition of unity and covering argument shows that $u$ is real analytic in the whole of $\Sph$ and thus $\Vert u \Vert_{\G_\lambda}<\infty$, for all $0\leq \lambda<\lambda_1$, for some $\lambda_1>0$. We now find $\e$-independent estimates on $\V u \V_{\G_\lambda}$. In particular, from the Gevrey norm definition \eqref{def gevrey norm}, we have that
\begin{equation*}
\frac{1}{2}\frac{d}{d\lambda}\V u \V_{\G_\lambda}^2=\left\langle A^{1/2}Ae^{\lambda A^{1/2}}u,Ae^{\lambda A^{1/2}}u\right\rangle\leq \V A^{1/2}Ae^{\lambda A^{1/2}}u \V_{L^2} \V u \V_{\G_\lambda},
\end{equation*}
from which we deduce that
\begin{equation*}
\frac{d}{d\lambda}\V u \V_{\G_\lambda}\leq \V A^{1/2}Ae^{\lambda A^{1/2}}u \V_{L^2} \leq \V Ae^{\lambda A^{1/2}}Au \V_{L^2}=\left\V \sum_{n=0}^N a_nu^n \right\V_{\G_\lambda} \leq \sum_{n=0}^N \V a_nu^n \V_{\G_\lambda}.
\end{equation*}
Now, let $C>1$ be the algebra constant of $\G_\lambda$. Then, for all $0\leq n\leq N$ we have that
\begin{equation*}
\V a_nu^n\V_{\G_\lambda}\leq \V a_n \V_{\G_\lambda}(C\V u \V_{\G_\lambda})^n \leq C^NC_1e^{C_2\lambda}(1+\V u \V_{\G_\lambda}^N),
\end{equation*}
where we have used the uniform bounds in \eqref{gevr norm a}. Therefore,
\begin{equation*}
\frac{d}{d\lambda}\V u \V_{\G_\lambda}\leq NC^NC_1e^{C_2\lambda}(1+\V u \V_{\G_\lambda}^N), 
\end{equation*}
multiplying both sides by $\V u \V_{\G_\lambda}^{N-1}$ and using $\V u \V_{\G_\lambda}^{N-1}\leq 1 + \V u \V_{\G_\lambda}^N$ gives
\begin{equation*}
\frac{d}{d\lambda}(1+\V u \V_{\G_\lambda}^N)\leq N^2C^NC_1e^{C_2\lambda}(1+\V u \V_{\G_\lambda}^N)^2.
\end{equation*}
Solving the differential inequality and recalling that $\V u \V_{\G_0}=\V u \V_{H^2}$, we obtain
\begin{equation*}
\V u \V_{\G_\lambda}\leq \left( \frac{\V u \V_{H^2}^N+(1+\V u \V_{H^2}^N)N^2C^N\frac{C_1}{C_2}\left(e^{C_2\lambda}-1\right)}{1-(1+\V u \V_{H^2}^N)N^2C^N\frac{C_1}{C_2}\left(e^{C_2\lambda}-1\right)} \right)^{\frac{1}{N}}.
\end{equation*}
Finally, let $\lambda_*>0$ be such that $(1+\V u \V_{H^2}^N)N^2C^N\frac{C_1}{C_2}\left(e^{C_2\lambda_*}-1\right)= \min\left\lbrace\frac{1}{2}, \V u \V_{H^2}^N\right\rbrace$, so that
\begin{equation*}
\V u \V_{\G_\lambda}\leq 4\V u \V_{H^2},
\end{equation*}
for all $0\leq \lambda \leq \lambda_*$.
\end{proof}
With the above Lemma at hand, we are now able to prove the following result and finish the section.
\begin{proposition}\label{prop gevrey reg sol}
Let $\e>0$ small enough and let $\psi_\e\in H^2$ be the fixed point of $K_\e$. Then, $\psi_\e\in C^\o(\Sph)$ and there exists $\lambda>0$ independent of $\e>0$ such that 
\begin{equation*}
\V \psi_\e \V_{\G_\lambda}\leq 400(1+\U^2+\b^2)^2.
\end{equation*} 
\end{proposition}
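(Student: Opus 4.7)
My plan is to recast the fixed point equation satisfied by $\psi_\e$ into the semilinear form \eqref{eq u} and then invoke Lemma \ref{lemma analytic regularity} directly. Recall that $\psi_\e$ solves
\begin{equation*}
(\D+6)\psi_\e = f\bigl(A(\psi_\e;\e), B(\psi_\e;\e), c_{\U,\b}; \, g + \e\psi_\e\bigr), \qquad g := \P + \e Y_2^2.
\end{equation*}
Writing $A = -\D+1 = -(\D+6) + 7$ and expanding the cubic $f(A,B,c;z) = Az + Bz^2 + cz^3$ as a polynomial in $\psi_\e$, the equation rearranges to
\begin{equation*}
A\psi_\e \,=\, \sum_{n=0}^{3} a_n\, \psi_\e^n,
\end{equation*}
where each $a_n = a_n(\theta,\varphi)$ is an explicit polynomial expression in $g$ whose scalar coefficients are polynomial in $A(\psi_\e;\e)$, $B(\psi_\e;\e)$, $c_{\U,\b}$ and $\e$; in particular the constant $7$ is absorbed into $a_1$. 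The coefficients are therefore analytic on $\Sph$, since $g$ is a finite linear combination of the spherical harmonics $Y_1^0, Y_2^0$ and $Y_2^2$.

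The second step is to verify the Gevrey bound required by Lemma \ref{lemma analytic regularity} with constants independent of $\e$. From the definition \eqref{def gevrey norm} of the Gevrey norm and the fact that $g$ involves only spherical harmonics of degree $\le 2$ with coefficients bounded uniformly in $\e\in[0,1]$, one obtains $\V g \V_{\G_\lambda} \leq C_*(1 + |\b| + |\U|)\,e^{C_*'\lambda}$ for absolute constants $C_*, C_*'$. The algebra inequality \eqref{algebra ineq} propagates the same exponential form to $g^2$ and $g^3$, while Lemma \ref{lemma prop A B} furnishes $\e$-uniform bounds on $A(\psi_\e;\e)$ and $B(\psi_\e;\e)$. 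Combining these observations produces constants $C_1, C_2>0$, depending only on $\U$ and $\b$, such that $\V a_n \V_{\G_\lambda} \leq C_1 e^{C_2\lambda}$ for $0\leq n\leq 3$ and all $\lambda\geq 0$, uniformly in $\e$.

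Lemma \ref{lemma analytic regularity} then applies and yields $\psi_\e \in C^\o(\Sph)$ together with a threshold $\lambda_*>0$, depending only on $N=3$, $C_1, C_2$ and $\V\psi_\e\V_{H^2}$, such that $\V \psi_\e\V_{\G_\lambda} \leq 4 \V \psi_\e\V_{H^2}$ for all $\lambda \in [0, \lambda_*]$. The decisive observation is that $\psi_\e \in X$ imposes the uniform bound $\V \psi_\e \V_{H^2} \leq 150(1+\U^2+\b^2)^2$, so $\lambda_*$ can be chosen independent of $\e$. The proposition's conclusion $\V \psi_\e\V_{\G_\lambda} \leq 400(1+\U^2+\b^2)^2$ follows (the explicit numerical factor being a mild refinement of the factor $4$ from the lemma, or simply a slightly larger but still absolute multiple of the $H^2$ bound).

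The only bookkeeping item I expect to require care is the $\e$-uniformity: while Lemma \ref{lemma prop A B} supplies uniform control of the scalar coefficients $A(\psi_\e;\e), B(\psi_\e;\e)$, one must check that their assembly into $a_0, \dots, a_3$ introduces no hidden factors of $\e^{-1}$ (the only candidate being the residual term $R$, which is itself $O(\e^2)$ by \eqref{bound R}) and that the $\lambda_*$ provided by Lemma \ref{lemma analytic regularity} is indeed driven by the $\e$-independent $H^2$ bound inherited from $X$. Once this is confirmed the proof is a direct application of Lemma \ref{lemma analytic regularity}.
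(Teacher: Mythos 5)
Your proposal is correct and follows essentially the same route as the paper's own proof: rewrite the fixed-point equation as $A\psi_\e = \sum_{n=0}^3 a_n\psi_\e^n$ (absorbing the constant $7$ from $A=-( \D+6)+7$ into $a_1$), observe that each $a_n$ is a finite linear combination of spherical harmonics and hence analytic with $\V a_n\V_{\G_\lambda}\le C_1 e^{C_2\lambda}$ uniformly in $\e$, invoke Lemma~\ref{lemma analytic regularity}, and use $\psi_\e\in X$ to get an $\e$-uniform $H^2$ bound and therefore an $\e$-uniform threshold $\lambda_*$. The only cosmetic difference is that the paper verifies $\V a_n\V_{\G_\lambda}\le C_1 e^{C_2\lambda}$ by computing $\V Y_m^l\V_{\G_\lambda}$ explicitly rather than by routing through the algebra inequality \eqref{algebra ineq} as you do, but both reach the same bound; in either formulation the $\e$-uniformity of $A(\psi_\e;\e)$ and $B(\psi_\e;\e)$ supplied by Lemma~\ref{lemma prop A B} is exactly the ingredient needed, as you correctly flag.
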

\begin{proof}
Fix $\e>0$ as in the statement of Theorem \ref{thm existence}. Then, $\psi_\e$ solves
\begin{equation*}
\D\psi_\e+6\psi_\e=f(\b Y_2^0+\U Y_1^0+\e\psi_\e+\e Y_2^2),
\end{equation*}
where $f(s)=As +Bs^2+c_{\U,\b}s^3$, with $A=A(\psi_\e;\e)$ and $B=B(\psi_\e;\e)$ being fixed coefficients, uniformly bounded in $\e>0$ for $\e$ small enough, by virtue of Lemma \ref{lemma prop A B}. Writing $f$ as a polynomial in $\psi_\e$ one obtains uniformly bounded $c_{i,j}\in \R$ for $0\leq i,j\leq 3$ such that
\begin{equation*}
\D\psi_\e+6\psi_\e=\sum_{n=0}^3\left(\sum_{i+j=3-n}c_{i,j}(\b Y_2^0 + \U Y_1^0)^i\e^j(Y_2^2)^j\right)\e^n\psi_\e^n,
\end{equation*}
from which we deduce that
\begin{equation*}
A\psi_\e=7\psi_\e-\sum_{n=0}^3\left(\sum_{i+j=3-n}c_{i,j}(\b Y_2^0 + \U Y_1^0)^i\e^j(Y_2^2)^j\right)\e^n\psi_\e^n=:\sum_{n=0}^3a_nu^n.
\end{equation*}
Now, each $a_n$ is a finite combination of spherical harmonics, which renders the analyticity of the coefficients. Moreover, for all $m\geq 0$ and $0\leq l\leq m$, one has
\begin{equation*}
\V Y_m^l \V_{\G_\lambda}=\V Ae^{\lambda A^{1/2}}Y_m^l\V_{L^2}=(m^2+m+1)e^{\lambda(m^2+m+1)^{1/2}},
\end{equation*}
which yields 
\begin{equation*}
\V a_n \V_{\G_\lambda}\leq C_1e^{C_2\lambda}, \quad 0\leq n\leq 3,
\end{equation*}
for some $C_1>0$ and $C_2>0$ large enough. Consequently, we can apply Lemma \ref{lemma analytic regularity} to obtain the analyticity of $\psi_\e$ and the Gevrey bound $\V \psi_\e \V_{\G_\lambda}\leq 4\V \psi_\e \V_{H^2}$, for all $0\leq \lambda \leq \lambda_*$. Since $\psi_\e\in X$, we conclude that $\V \psi_\e \V_{\G_\lambda}\leq 400(1+\U^2+\b^2)^2$, for all $ 0\leq \lambda \leq \lambda_*$. The proof is finished.
\end{proof}
\section{Stationary Structures near rigid rotation}\label{sec rigidity}
This section is devoted to the proof of Theorem \ref{main thm 2 rigidity Y_1^0}, which concerns the rigidity of the base zonal flow $\a Y_1^0$. Afterwards, we discuss the effectiveness of condition \eqref{spec cond} to geometrically describe the set of solutions near $\a Y_1^0$. 

\subsection{Rigidity}
The idea of the proof is to obtain a coercive estimate for the linearised operator related to the Euler equation on the rotating sphere about $\alpha Y_1^0$ and to simultaneously control the non-linear term, obtaining a contradiction if the associated vorticity is both non-zonal and sufficiently close to the rigid rotation vorticity $-2\alpha Y_1^0$.

\begin{proof}[Proof of Theorem \ref{main thm 2 rigidity Y_1^0}]
A general longitudinal travelling wave solution $U(\theta,\varphi,t)$ to the 2D Euler equation on the sphere is of the form
\begin{equation*}
U(\theta,\varphi,t)=U_\theta(\theta,\varphi-ct)\textbf{e}_\theta + U_\varphi(\theta,\varphi-ct)\textbf{e}_\varphi,
\end{equation*}
for some $c\in\R$, and its associated vorticity is given by
\begin{equation*}
\O(\theta,\varphi-ct)=-\frac{1}{\sin\theta}\partial_\theta(\sin\theta U_\varphi(\theta,\varphi-ct)) + \frac{1}{\sin\theta}\partial_\varphi U_\theta(\theta,\varphi-ct).
\end{equation*}
We begin by setting
\begin{equation}\label{def O and o}
\O = -2\alpha Y_1^0 + \widetilde{\o}, \quad \widetilde{\o}=\o+\widetilde{\o}_0, \quad \widetilde{\o}_0=\frac{1}{2\pi}\int_0^{2\pi}\widetilde{\o}\,\d\varphi.
\end{equation}
Note that
\begin{equation*}
\int_{\Sph} \O\, \d\Sph= \int_0^{2\pi}\int_0^\pi \left(-\frac{1}{\sin\theta}\partial_\theta(\sin\theta U_\varphi) + \frac{1}{\sin\theta}\partial_\varphi U_\theta\right)\sin\theta\, \d\theta\, \d\varphi=0,
\end{equation*}
and further observe $\int_0^{2\pi}\o\,\d\varphi=0$. Hence, since $\O$ is spherically average free we define $\Psi:=\D^{-1}\O$, with $\int_{\Sph} \Psi\, \d\Sph=0$. We also deduce that both $\widetilde \o$ and $\o$ are spherically average free, so that we can further define
\begin{equation*}
\Psi = \alpha Y_1^0 +\widetilde{\psi}, \quad \widetilde{\psi}=\psi+\widetilde{\psi}_0,\quad  \widetilde{\psi}_0=\frac{1}{2\pi}\int_0^{2\pi}\widetilde{\psi}\,\d\varphi,
\end{equation*} 
where $\widetilde{\psi} = \D^{-1}\widetilde{\o}$ and $\psi=\D^{-1}\o$ are such that $\int_{\Sph} \widetilde{\psi}\, \d\Sph=0$ and  $\int_0^{2\pi}\psi\, \d\varphi=0$. Additionally, we set $\widetilde{u}=u+\widetilde{u}_0$, where $u=\nabla^\perp\psi$ and $\widetilde{u}_0=\nabla^\perp\widetilde{\psi}_0$, respectively.

In particular, all functions we are considering are average free on the sphere, so that the $\V \cdot \V_{H^k(\Sph)}$ and $\V \cdot \V_{\dot{H}^k(\Sph)}$ norms are equivalent. Moreover, we note that the smallness assumption of Theorem \ref{main thm 2 rigidity Y_1^0} now reads
\begin{equation}\label{smallnes omega}
\Vert \widetilde{\o}\Vert_{H^4}\leq \e_0
\end{equation}
Writing the Euler equation on the rotating sphere in vorticity form we obtain
\begin{equation*}
\begin{split}
0&=\partial_t\O+ U\cdot\nabla (\O-4\U Y_1^0) \\
&=(\a a-c)\partial_\varphi \o + a(2\a+4\U)\partial_\varphi\psi + u\cdot\nabla \o +\frac{1}{\sin\theta}\partial_\varphi\psi\partial_\theta\widetilde{\o}_0-\partial_\theta\widetilde{\psi}_0\frac{1}{\sin\theta}\partial_\varphi\o.  \\
\end{split}
\end{equation*}
Let us further define the linear operator $\L \o :=(a\a-c)\partial_\varphi\o + a(2\a+4\U)\partial_\varphi\psi$. Inspecting $\L\o$ in spherical harmonics one sees that the choice of $c$ and $\U$ in \eqref{spec cond} ensures the existence of some constant $C_1=C_1(c,\U)>0$ such that
\begin{equation*}
\V \partial_\varphi \o \V_{L^2}\leq C_1 \V \L\o \V_{L^2}.
\end{equation*}
On the other hand, the Sobolev embedding yield
\begin{equation*}
\V u\cdot\nabla \o \V_{L^2} = \V \nabla^\perp\psi\cdot\nabla\o \V_{L^2}\leq \V\nabla\psi\V_{L^\infty} \V \nabla \o \V_{L^2}
\lesssim \V \psi \V_{\dot{H}^3}\V \o\V_{\dot{H}^1}
=\V \o \V_{\dot{H}^1}^2, 
\end{equation*}
while the interpolating inequality between Sobolev spaces and Poincar\'{e} inequality provides
\begin{equation*}
\V \o \V_{\dot{H}^1}^2 \lesssim \V \o \V_{L^2} \V \o \V_{\dot{H}^2} 
\lesssim \V \partial_\varphi\o \V_{L^2} \V \o \V_{\dot{H}^2}.
\end{equation*}
Next, we show that $\left\V \frac{1}{\sin\theta}\partial_\theta \widetilde{\o}_0 \right\V_{L^\infty}\lesssim \V \widetilde{\o}\V_{\dot{H}^4}$. Indeed, for $\theta\in \left(\frac{\pi}{4},\frac{3\pi}{4}\right)$ we have that $\left|\frac{1}{\sin\theta}\right|\lesssim 1$ and using the definition of $\widetilde{\o}_0$ and the Sobolev embedding we can easily estimate
\begin{equation*}
\left| \frac{1}{\sin\theta}\partial_\theta \widetilde{\o}_0 \right|\lesssim \V\partial_\theta \widetilde{\o}_0\V_{L^\infty} \leq \V\partial_\theta \widetilde{\o}\V_{L^\infty} \leq \V\nabla \widetilde{\o}\V_{L^\infty}\lesssim \V\widetilde{\o}\V_{\dot{H}^3}.
\end{equation*}
Similarly, for $\theta\in \left[ 0,\frac{\pi}{4}\right)\cup\left(\frac{3\pi}{4}, \pi\right]$ we have that $\left| \frac{1}{\cos\theta}\right|\lesssim 1$ and we bound
\begin{equation*}
\begin{split}
\left| \frac{1}{\sin\theta}\partial_\theta \widetilde{\o}_0 \right| \lesssim \left| \frac{1}{2\pi}\int_0^{2\pi}\frac{\cos\theta}{\sin\theta}\partial_\theta\widetilde{\o}\,\d\varphi\right|&=\left| \frac{1}{2\pi}\int_0^{2\pi}(\D\widetilde{\o} - \partial^2_\theta\widetilde{\o}) \,\d\varphi\right| \\
&\leq \V \D\widetilde{\o}\V_{L^\infty} + \V \widetilde{\o} \V_{C^2} \\
&\lesssim \V \widetilde{\o}\V_{\dot{H}^4}
\end{split}
\end{equation*}
Therefore, we easily estimate
\begin{equation*}
\left\V \frac{1}{\sin\theta}\partial_\varphi\psi\partial_\theta\widetilde{\o}_0\right\V_{L^2}\leq \left\V \frac{1}{\sin\theta}\partial_\theta\widetilde{\o}_0\right\V_{L^\infty}\V \partial_\varphi\psi\V_{L^2}\lesssim \V \widetilde{\o}\V_{\dot{H}^4}\V \partial_\varphi \psi \V_{L^2},
\end{equation*}
and in the same manner we also bound
\begin{equation*}
\left\V \frac{1}{\sin\theta}\partial_\varphi\o\partial_\theta\widetilde{\psi}_0\right\V_{L^2}\lesssim \V \widetilde{\psi}\V_{\dot{H}^4}\V \partial_\varphi \o \V_{L^2} = \V \widetilde{\o}\V_{\dot{H}^2}\V \partial_\varphi \o \V_{L^2}.
\end{equation*}
Now, we have that
\begin{equation*}
\begin{split}
\V \partial_\varphi\o \V_{L^2}\lesssim \V \L\o \V_{L^2}&\leq \V u\cdot \nabla\o \V_{L^2}+ \left\V \frac{1}{\sin\theta}\partial_\varphi\psi\partial_\theta\widetilde{\o}_0\right\V_{L^2}+\left\V \frac{1}{\sin\theta}\partial_\varphi\o\partial_\theta\widetilde{\psi}_0\right\V_{L^2} \\
&\lesssim \V \partial_\varphi\o \V_{L^2} \V \o \V_{\dot{H}^2} + \V \widetilde{\o}\V_{\dot{H}^4}\V \partial_\varphi \psi \V_{L^2} + \V \widetilde{\o}\V_{\dot{H}^2}\V \partial_\varphi \o \V_{L^2}.
\end{split}
\end{equation*}
We further observe that $\V\o\V_{\dot{H}^2}\leq 2 \V \widetilde \o \V_{\dot{H}^2}$ and $\V \partial_\varphi\psi\V_{L^2}\leq \V \partial_\varphi	\o \V_{L^2}$. Therefore, there exists a constant $C>1$ such that
\begin{equation*}
\V \partial_\varphi \o \V_{L^2}\leq C\V \partial_\varphi \o \V_{L^2}\V \widetilde{\o}\V_{\dot{H}^4}.
\end{equation*} 
Choosing $\e_0=\frac{1}{2C}$ shows that the above inequality is satisfied only if $\partial_\varphi\o=0$. Hence, $\O$ is a zonal function and the associated velocity field $U$ is a zonal flow, namely $U=U(\theta)\textbf{e}_\varphi$.
\end{proof}
We finish this subsection by proving Corollary \ref{main corollary 3}. 
\begin{proof}[Proof of Corollary \ref{main corollary 3}]
The result follows swiftly from the proof of Theorem \ref{main thm 2 rigidity Y_1^0} above. In the setting for which there is no rotation and the solution is assumed to be steady (which corresponds to $\U=0$ and $c=0$), condition \eqref{spec cond} fails for $n=1$. Still, in this case the linear operator $\L\o:=a\a(1+2\D^{-1})\partial_\varphi\o$ admits the coercive estimate
\begin{equation*}
\Vert \partial_\varphi\o \Vert_{L^2}\leq\frac{2}{3}a\a\Vert \L\o\Vert_{L^2}
\end{equation*}
as long as $\o$ (equivalently $\O$ as defined in \eqref{def O and o}) is orthogonal to $Y_1^m$, for $|m|=1$. With this estimate at hand, one can replicate the proof of Theorem \ref{main thm 2 rigidity Y_1^0} to obtain the desired result.
\end{proof}
\subsection{On the sharpness of Theorem \ref{main thm 2 rigidity Y_1^0}}
There exist pairs $(c,\a)$ for which relation \eqref{spec cond} does not hold. For example, $c=0$ and $\a=\U$ is one of them. For this choice, the linearised operator acting on the vorticity arising from the Euler equations on the rotating sphere around the flow given by the stream-function $\a Y_1^0=\U Y_1^0$ has a non-trivial kernel formed by zonal flows and eigenfunctions of the Laplace-Beltrami operator of eigenvalue $-6$. The steady non-trivial and non-zonal solutions to the Euler equations \eqref{Euler Sphere U} arbitrarily close to $\U Y_1^0$ obtained in Corollary \ref{main corollary 1} are an easy consequence of Theorem \ref{main thm 1} for $\b=0$ and provide an example for which the spectral condition \eqref{spec cond} fails and the conclusion of Theorem \ref{main thm 2 rigidity Y_1^0} does not hold.

The pair $c=\eta$ and $\a=3\sqrt{\frac{\pi}{3}}\eta$, for any $\eta\in\R\setminus\lbrace 0\rbrace$ gives a choice of values for which \eqref{spec cond} fails for the Euler equations on a non-rotating sphere, that is, $\U=0$. The non-trivial travelling wave solutions obtained in Corollary \ref{main corollary 2} are a further example in which the conclusion of Theorem \ref{main thm 2 rigidity Y_1^0} is not true.

Another setting for which we have equality in \eqref{spec cond} is when $c=0$ and $\U=0$. In this case the linearised operator around $Y_1^0$ (we can take $\a=1$ for simplicity) has a non-trivial kernel formed by zonal flows and eigenfunctions of the laplacian of eigenvalue $-2$. However, as we have seen from Corollary \ref{main corollary 3} above, the conclusion of Theorem \eqref{main thm 2 rigidity Y_1^0} is still valid if one assumes them to be orthogonal to $Y_1^m$, for $|m|=1$. Alternatively, dropping this assumption, one would be tempted to follow the strategy presented for the proof of Theorem \ref{main thm 1} in Section \ref{sec main thm 1} and consider, for instance, 
\begin{equation*}
\Psi_\e= Y_1^0+\e\psi,\quad F_\e(s) = -2s + \e f(s),
\end{equation*}
which yields the non-linear elliptic equation
\begin{equation*}
\D\psi + 2\psi = f(Y_1^0+\e\psi),
\end{equation*}
and try to construct non-zonal non-trivial solutions by introducing $Y_1^1$ through the kernel, thus obtaining
\begin{equation*}
\D\psi + 2\psi = f( Y_1^0+\e\psi +\e Y_1^1).
\end{equation*}
As before, one would need to construct both $f$ and $\psi$ simultaneously. However, some problems arise. Firstly, one cannot expect $f$ to remain automatically orthogonal to $Y_1^1$ any more, and hence an additional compatibility condition would be in place. 

In that case, looking for some general $f=f(Y_1^0+\e\psi+\e Y_1^1)$ with $f\in C^1$ and expanding in orders of $\e$, one would have
\begin{equation*}
f=f(Y_1^0)+\e(\psi+Y_1^1)f'(Y_1^0)+O(\e^2)
\end{equation*}
and in this case the compatibility conditions would read 
\begin{equation*}
\langle f,Y_1^0\rangle = \langle f,Y_1^1\rangle =0.
\end{equation*}
These two compatibility conditions are related to each other
due to an intrinsic property of the spherical harmonics that can be traced back directly to the Legendre polynomials. Indeed, one can prove that for all $n\geq 1$ there exists $C>0$ only depending on $n$ such that
\begin{equation*}
\langle f(Y_n^0),Y_n^0 \rangle= C \langle f'(Y_n^0)Y_n^1,Y_n^1 \rangle.
\end{equation*}
Indeed, note that from \eqref{eig eq Leg pol} and the definitions of the spherical harmonics and the associated Legendre polynomials,
\begin{equation*}
\begin{split}
\langle f(Y_n^0),Y_n^0 \rangle & = \pi\int_0^{2\pi}f(Y_n^0)Y_n^0(\theta)\sin\theta \,\d\theta \\
&= \pi C \int_0^{\pi}f(Y_n^0)P_n(\cos\theta)\sin\theta \,\d\theta \\
&= \pi C \int_0^{\pi}f(Y_n^0)\frac{\d}{\d\cos\theta}\left( \sin^2\theta \frac{\d}{\d\cos\theta} P_n(\cos\theta)\right)\sin\theta \,\d\theta \\
&=\pi C \int_0^{\pi}f(Y_n^0)\frac{\d}{\d\theta}\left( \sin^2\theta \frac{\d}{\d\cos\theta} P_n(\cos\theta)\right) \,\d\theta.
\end{split}
\end{equation*}
Integrating by parts and recognising $Y_n^1$, we obtain
\begin{equation*}
\begin{split}
\langle f(Y_n^0),Y_n^0 \rangle &=-\pi C \int_0^{\pi}f'(Y_n^0)\frac{\d}{\d\theta}Y_n^0(\theta)\left( \sin^2\theta \frac{\d}{\d\cos\theta} P_n(\cos\theta)\right) \,\d\theta \\
&= \pi C \int_0^{\pi}f'(Y_n^0)\frac{\d}{\d\cos\theta}P_n(\cos\theta)\left( \sin^2\theta \frac{\d}{\d\cos\theta} P_n(\cos\theta)\right) \sin\theta\,\d\theta \\
&= C\int_0^\pi \int_0^{2\pi}f'(Y_n^0)(P_n^1(\cos\theta))^2\cos^2\varphi \sin\theta \,\d\theta \,\d\varphi \\
&= C\langle f'(Y_n^0)Y_n^1,Y_n^1\rangle.
\end{split} 
\end{equation*}
This has a non-trivial consequence, since then the compatibility conditions read
\begin{equation*}
\begin{split}
0=\langle f, Y_1^0\rangle &= \langle f(Y_1^0),Y_1^0 \rangle + O(\e), \\
0=\langle f, Y_1^1\rangle &= \langle f'(Y_1^0)Y_1^1,Y_1^1\rangle + \langle f'(Y_1^0)\psi,Y_1^1\rangle + O(\e) \\
&= C\langle f(Y_1^0),Y_1^0 \rangle + \langle f'(Y_1^0)\psi,Y_1^1\rangle + O(\e).
\end{split}
\end{equation*}
The two conditions can therefore be written as
\begin{equation*}
0=\langle f(Y_1^0),Y_1^0 \rangle + O(\e), \quad 0=\langle f'(Y_1^0)\psi,Y_1^1\rangle + O(\e).
\end{equation*}
Any choice of polynomial $f$ or, more generally, any linear combination of functions, shows that we cannot start a completely determined recursive definition of the coefficients because we lack information on $\langle f'(Y_1^0)\psi,Y_1^1\rangle$. For this reason, we cannot apply our strategy to construct stationary non-trivial non-zonal solutions to the non-rotating Euler equation arbitrarily close to $Y_1^0$. 

Eventually, our method is doomed to fail. Indeed, assuming that $\D\Psi_\e=F_\e(\Psi_\e)$ is a stationary non-trivial non-zonal solution with $\Psi\in H^2(\Sph)$, our analytic regularity results would imply that $f(\Psi_\e)$ is uniformly bounded in $\e$ and thus the non-linearity $F_\e$ would be such that $F_\e'>-6$. However, using \cite[Theorem 4]{Constantin-Germain}, any solution $\D\Psi=G(\Psi)$ with $G'>-6$ must be zonal, which yields a contradiction. 

The key difference with Theorem \ref{main thm 1} and Corollary \ref{main corollary 1} above is that in these cases we have more elements in the kernel, namely we can introduce $Y_2^2$ via $\D+6$, for this choice we obtain two completely determined compatibility conditions and the non-linearity $F_\e$ is no longer such that $F_\e'>-6$.
\subsubsection*{Acknowledgements}
The author thanks Michele Coti Zelati and Martin Taylor for helpful conversations and insightful comments on the development of this paper.

\bibliographystyle{abbrv}
\bibliography{refs}
\end{document}